\newtheorem{theorem}{Theorem}
\newtheorem*{theorem*}{Theorem}
\newtheorem{lemma}[theorem]{Lemma}
\newtheorem{dfn}{Definition}
\newtheorem*{thm}{Theorem}
\newtheorem*{hyp}{Hypothesis H}
\def\prob{\mathbb{P}}
\def\ve{\varepsilon}
\newcommand{\E}{\mathbb{E}}
\newcommand{\pp}{\mathbb{P}}
\newcommand{\eps}{\varepsilon}
\newenvironment{enumerate*}{%reduces item spacing
\begin{enumerate}
  \setlength{\itemsep}{5pt}
  \setlength{\parskip}{0pt}
  \setlength{\parsep}{0pt}
}{\end{enumerate}}
\newenvironment{itemize*}{%reduces item spacing
\begin{itemize}
  \setlength{\itemsep}{5pt}
  \setlength{\parskip}{0pt}
  \setlength{\parsep}{0pt}
}{\end{itemize}}
\begin{document}

\title{
{\Large {\bf Concentration of the number of solutions of random planted CSPs and Goldreich's one-way function candidates}\vspace{.4cm}
}} 
\author{Emmanuel Abbe\thanks{Program in Applied and Computational Mathematics, and EE Department, Princeton University, Princeton, NJ. Email: eabbe@princeton.edu.} \and Katherine Edwards\thanks{Department of Computer Science, Princeton University, Princeton, NJ. Email: ke@princeton.edu.} }

\maketitle
\date{}
\begin{abstract}
This paper shows that the logarithm of the number of solutions of a random planted $k$-SAT formula concentrates around a deterministic $n$-independent threshold.
Specifically, if $F^*_{k}(\alpha,n)$ is a random $k$-SAT formula on $n$ variables, with clause density $\alpha$ and with a uniformly drawn planted solution, there exists a function $\phi_k(\cdot)$ such that, besides for some $\alpha$ in a set of Lesbegue measure zero, we have 
$ \frac{1}{n}\log Z(F^*_{k}(\alpha,n))  \to \phi_k(\alpha)$ in probability, where $Z(F)$ is the number of solutions of the formula $F$.  This settles a problem left open in Abbe-Montanari RANDOM 2013, where the concentration is obtained only for the expected logarithm over the clause distribution. The result is also extended to a more general class of random planted CSPs; in particular, it is shown that the number of pre-images for the Goldreich one-way function model concentrates for some choices of the predicates. 
\end{abstract}

\maketitle
\date{}

% \begin{abstract}
% an abstract
% \end{abstract}

% {\bf Keywords:}

\thispagestyle{empty}

\newpage
\pagenumbering{arabic}

% ------------------------------------------------------------
%%%%%%%%%%%%%%%%%%%%%%%%%%%%%%%%%%%%%%%%%%%%%%%%%%%%%%%%%%%%%%%%%%%%%%%%%%%%%%%%%%%%%%%%%%
%%%%%%%%%%%%%%%%%%%%%%%%%%%%%%%%%%%%%%%%%%%%%%%%%%%%%%%%%%%%%%%%%%%%%%%%%%%%%%%%%%%%%%%%%%
%%%%%%%%%%%%%%%%%%%%%%%%%%%%%%%%%%%%%% INTRODUCTION %%%%%%%%%%%%%%%%%%%%%%%%%%%%%%%%%%%%%%
%%%%%%%%%%%%%%%%%%%%%%%%%%%%%%%%%%%%%%%%%%%%%%%%%%%%%%%%%%%%%%%%%%%%%%%%%%%%%%%%%%%%%%%%%%
%%%%%%%%%%%%%%%%%%%%%%%%%%%%%%%%%%%%%%%%%%%%%%%%%%%%%%%%%%%%%%%%%%%%%%%%%%%%%%%%%%%%%%%%%%

\section{Introduction}
This paper investigates concentration phenomena for the number of solutions in random planted random constraint satisfaction problems (CSPs) and the Goldreich one-way function candidate.  
%and the censored block model (a noisy XORSAT model used for community detection). 

A large body of works have studied phase transition phenomena for satisfiability in random CSPs. 
For uniform\footnote{The model may have a fixed but uniform number of constraints, or a Binomial or equivalent form.} models, the probability of being satisfiable often tends to a step
function as $n$ tends to
infinity, jumping from 1 to 0 when the constraint density crosses a
critical threshold. For random $k$-XORSAT the existence of such a
critical threshold is proved \cite{2-xorsat,dubois,cuckooArxiv,pittel-xorsat}.
For random $2$-SAT, the threshold is proved in \cite{Chvatal,DeLaVega,Goerdt}. 
For random $k$-SAT, $k \geq 3,$ the existence of an $n$-dependent
threshold is proved in \cite{FrBo99}, and the satisfiability threshold conjecture states that this threshold is $n$-independent for all $k$. 
Recently, the conjecture was settled for $k$ large enough \cite{sly_sat}, while 
upper and lower bounds are known to match up to a 
term that is of relative order $k\,2^{-k}$ as $k$ 
increases \cite{AchlioetalNature,coja_asym}. Moreover, 
phase transition phenomena were also studied for a broad family of other CSPs, see for example \cite{AchlioptasTwoCol,MRT09,AchlioetalNature} and references therein.

The counting problem for random formulas has also received attention recently. 
In \cite{AbMo13RSA}, a concentration result is obtained for the number of solutions: at a fixed clause density $\alpha$, the number of solutions of a random $2$-SAT formula concentrates in the logarithmic scale to a deterministic $n$-independent threshold for almost every $\alpha$. This result is extended for $k\geq 3$ for all clause densities having an UNSAT probability decaying fast enough (with a mild logarithmic decay being enough), which is conjectured to take place up to the SAT threshold. 
This result is obtained in two parts. 
First, as was shown earlier in \cite{AchlioptasGeom,AMarxiv}, the property that a random $k$-SAT formula has a number of solution bounded by $2^{n \phi}$, for a fixed $\phi$, has a phase transition with an $n$-dependent threshold, proved \`a la Friedgut. 
This is then turned into a concentration result for the number of solutions in \cite{AbMo13RSA} by showing that the limit for $\frac{1}{n} \E \log(1 + Z(F))$ exists, where $Z(F)$ is the number of solutions of the random formula.
Observe that this gives an $n$-independent threshold for the concentration.  
The key tool in establishing this limit is the interpolation
method, first introduced in \cite{GuerraToninelliLimit}
for the Sherrington-Kirkpatrick model, and subsequently generalized and extended in 
\cite{FranzLeone,FranzLeoneToninelli,PanchenkoTalagrand,BayatiInterpolation,AbMo13RSA}.
Note however that the use of ``$1+$'' in the logarithm above (to obtain a well defined quantity) is responsible for the difficulty in obtaining the concentration for all clause densities when $k \geq 3$.

In this paper, we consider CSPs that have a planted solution and study the counting problem for random ensembles. Planted CSPs are a rich ground for studying combinatorial optimization problems motivated by `real-world' applications, such as in coding theory, community detection, or cryptography, where a solution typically does exist but where the problem is to identify how many other solutions are there, or how hard is it to recover the planted solution.   
Planted ensembles were investigated in \cite{barthel,haanpaa,AAAI,21,jia,barriers}, and at high density in \cite{altarelli,vilenchik,feige2006complete}, and relationships between planted random CSPs and their non-planted counterparts in the satisfiable phase were studied in \cite{barriers,quiet1,quiet2}.

It was shown recently in \cite{AbMo13} that for a broad class of random planted CSPs, the logarithm of the number of solution concentrates to an $n$-independent deterministic threshold for almost every clause density. In particular, this covers $k$-SAT for all $k$'s. Hence, the planting allows to circumvent the issues of establishing the limit of the log-partition function, since the latter is well defined due to the planting (no need for the ``$1+$'' term discussed above). However, the planting also introduces asymmetry in the model, which lead \cite{AbMo13} to a weaker concentration result: the concentration is obtained with respect to the graph ensemble but is taken {\it in expectation} over the clause distribution.

% Let us explain this nuance more precisely for random $k$-SAT. A random planted formula is defined in this case by drawing first a random uniform solution $X^{(0)}$, and independently, a random 3-hypergraph $G=([n],E)$ at a fixed edge density. The random clauses are then defined for each edge $I \in E(G)$ by drawing a negation pattern $s_I$ uniformly at random within the set of negations patterns that preserve $X^{(0)}$ as a planted solution. Specifically, the clause for edge $I$ is defined by $x[I] \neq s_I$. Note that this is indeed equivalent to requiring that the OR of the variables in $x[I]$ negated with the pattern $s_I$ is 1. Consider now $$\phi_n:= \frac{1}{n}\log (Z(F^{(0)})),$$ where $F^{(0)}$ is the random planted formula. In \cite{abbetoc}, it is shown that $\E_s \phi_n$, the expectation of $\phi_n$ taken over the variables $s=\{s_I\}_{I \in E(G)}$, concentrates in probability (with respect to the drawing of $G$) to a deterministic $n$-independent value.\footnote{Note that the variables $s=\{s_I\}_{I \in E(G)}$ depend on the planted assignment. For a deterministic kernel $Q$, this means in expectation over the planted assignment.} It is left open to obtain concentration with respect to the drawing of $s$ as well. In particular, the martingale argument used in \cite{abbetoc} fails in this case, since the fluctuations are not bounded, and the application of Friedgut's theorem is mitigated by the lack of symmetry caused by the planting. 
Let us explain this nuance more precisely for random $k$-SAT. A random planted formula is defined in this case by drawing first a random uniform solution $x^0$, and independently, a random 3-hypergraph $G=([n],E)$ at a fixed edge density. The random clauses are then defined for each edge $e \in E(G)$ by drawing a negation pattern $s_e$ uniformly at random within the set of negations patterns that preserve $x^0$ as a planted solution. Specifically, the clause for edge $e$ is defined by $y[e] \neq s_e$ (where $y[e]$ is an assignment of literals to the variables associated with $e$). 
Note that this is indeed equivalent to requiring that the OR of the variables in $y[e]$ negated with the pattern $x_e$ is 1. Consider now $$\phi_n:= \frac{1}{n}\log (Z(F^{(0)})),$$ where $F^{(0)}$ is the random planted formula. 
In \cite{abbetoc}, it is shown that $\E_s \phi_n$, the expectation of $\phi_n$ taken over the variables $s=\{s_e\}_{e \in E(G)}$, concentrates in probability (with respect to the drawing of $G$) to a deterministic $n$-independent value.\footnote{Note that the variables $s=\{s_e\}_{e \in E(G)}$ depend on the planted assignment. For a deterministic kernel $Q$, this means in expectation over the planted assignment.} It was left open to obtain concentration with respect to the drawing of $s$ as well. In particular, the martingale argument used in \cite{abbetoc} fails in this case, since the fluctuations are not bounded, and the application of Friedgut's theorem is mitigated by the lack of symmetry caused by the planting. 

We resolve in this paper the above problem left open in \cite{abbetoc} and show that for almost every $\alpha$, there exists an $n$-independent value $\phi(\alpha)$ such that 
\begin{align*}
\frac{1}{n}\log (Z(F^{(0)})) \to \phi(\alpha) \,\,\, \text{ in probability,}
\end{align*} 
closing the concentration problem. 
The main tool is based on Bourgain's result from the appendix of \cite{FrBo99}. 
The result is then generalized to a broad class of planted CSPs, and a new a application to Goldreich's one way function \cite{goldreich-planted} is investigated. 

% where it is shown that the number of pre-images to the function  concentrates. 

The Goldreich one-way function candidate is defined from a $k$-hypergraph $G$ on $n$ vertices and $m$ hyperedges and a {\it fixed} predicate function $\chi: \{0,1\}^k \to \{0,1\}$. 
The function takes an input $x\in \{0,1\}^n$ and, evaluating $\chi$ at each of the $m$ $k$-tuples selected by the hyperedges of $G$, produces an output in $\{0,1\}^m$. 
In \cite{goldreich-planted}, $G$ is proposed to be drawn at random with an edge density $m/n$, and the choice of predicates is further discussed in \cite{BoQi09}.  Note that both $m=\omega(n)$ and $m=\Theta(n)$ are potential candidates \cite{goldreich-planted,BoQi09}. Defining the rate of the one-way function by $m/n$, it is interesting to understand for what rates (in addition to what predicates) is the function possibly one-way, in particular, for the case of $m/n=\alpha$ constant. A natural approach would be to relate this question to the structure of the solution space of the underlying CSP, starting with its size, and hypothetically with the condensation \cite{OurPNAS,cond_color} and freezing of the solution clusters \cite{frozen} phenomena.\footnote{In the non-planted models, these phenomena have been associated with computational barriers for satisfiability.} In particular, the function $\phi(\cdot)$ is expected to have a kink at the condensation threshold for various CSPs \cite{andrea_comm}, and this may indicate a behavioral changes for the hardness of the one-way function. In this paper, we investigate the most basic question towards such considerations: does the function $\phi$ even exist? Namely, does the normalized logarithm of the number of pre-images concentrates for some/all predicates?

%\begin{figure}[h]
%\centering
%  \includegraphics[width=.5\linewidth]{phi_curve}
%  \caption{Illustrative representation of the limit of the expectation of the normalized logarithm of the number of solutions for random planted 3-SAT.}
%  \label{phi_curve}
%\end{figure}

We answer this question by the affirmative for a certain class of predicates. Interestingly, it is not obvious that this class of predicates overlaps with the class of predicates that precludes the non-hardness conditions introduced in \cite{BoQi09} for large clause densities. We hence leave an open problem: can one obtain concentration and hardness at the same time, or is hardness related to the non-concentration? We believe that the former is true and that our proof technique stumbles on technicalities, but we cannot resolve this argument.

\section{Models}

\subsection{CSPs arising from satisfiability problems}\label{sec:satmodel}
We first describe a class of constraint satisfaction problems.
Let $V = \{v_1,\dots,v_n\}$ be a set of Boolean variables, and fix an integer $k\geq 2$.
An instance $F$ of a CSP consists of a $k$-uniform multi-hypergraph $(V,E)$ (that is, all edges have cardinality $k$ and we allow parallel edges),
% \footnote{The elements of $V$ are called the variables and members of $E$, called edges, are $k$-subsets of $V$. 
% We allow multiple copies of the same edge.}
and a family of \emph{clause functions} $\chi_e: \{0,1\}^k \rightarrow \{0,1\}$ for each $e\in E$.
A \emph{$k$-clause} comprises an edge $e$ and its corresponding function $\chi_e$.
We'll sometimes call $F$ a formula. 
The form of the clause function depends on the type of satisfiability problem we are interested in (for the moment, SAT, NAESAT or XORSAT).
Let $y[V]$ denote an assignment $y_1,\dots,y_n$ of Boolean values to the variables in $V$, and $y[e]$ its restriction to the $k$ variables in $e$.
By $\chi_e(y[e])$ we mean the result of evaluating $\chi_e$ on the $k$ values in some \emph{fixed} order (for the moment, the actual choice of ordering of variables in edges isn't important but it will be when we consider certain planted models in Section \ref{sec:planted}.)
This model naturally captures familiar satisfiability problems:

\begin{itemize}
	\item in $k$-SAT, we have $\chi_e(y[e]) = 1 \iff y[e]\neq x_e$ where $x_e \in \{0,1\}^k$ represents a particular forbidden pattern,
	\item in $k$-NAESAT, we have $\chi_e(y[e]) = 1 \iff y[e]\notin \{x_e, \overline{x_e}\}$ where $\overline{x_e}$ is the result of flipping each component of $x_e$,
	\item in $k$-XORSAT, we have $\chi_e(y[e]) = 1 \iff \oplus_i y_i = x_e$ where now $x_e\in \{0,1\}$.
\end{itemize}

% We will assume clause functions take one of these three forms, though some of our results extend to other models as we'll see in Section \ref{sec:planted}.
An assignment which satisfies all clauses in $F$ is called a satisfying assignment (or solution) for $F$.
Let $C_k(n)$ be the set of all possible $k$-clauses on $V$ and write $N = |C_k(n)|$; in $k$-SAT for example we have $N=\binom{n}{k}2^k$.
We use the \emph{binomial model} for a random CSP with clause density $\alpha\in [0,N/n]$, and draw a random formula $G$ as follows:
\footnote{ 
% Note that for $X\subseteq \{0,1\}^N$ we have $\mu_p(X) = \sum_{x\in X}\mu_p(x)$.
One could also consider the \emph{uniform model}, wherein $G(n,\alpha)$ is chosen uniformly from those vectors $x \in \{0,1\}^N$ with $|x| = \alpha n$, where $ {\alpha n}/N = p$.
The models are essentially equivalent, and we mostly focus here on the binomial model. }

\begin{equation}
\text{ $\bullet$ include in $G$ each clause in $C_k(n)$ with probability $p={\alpha n}/N$.}
\end{equation}
Let $G(n,\alpha)$ denote a formula obtained by this process.
The formula $G(n,\alpha)$ can be viewed as a random element of $\{0,1\}^N$, drawn according to the product measure $\mu_p$.
\footnote{To see the correspondance, identify each of the $N$ components with a clause and set it to $1$ if and only if the clause is present in the formula.}
% Our probability space $\{0,1\}^N$, equipped with a product measure $\mu_p$.
That is, for each $x\in \{0,1\}^N$ we have $\mu_p(x) := \prob[G(n,\alpha) = x] = p^{|x|}(1-p)^{N-|x|}$ (where $|x|$ denotes the number of nonzero components). 
Now consider a procedure to sample a \emph{planted} CSP $F$. 
\begin{align}\label{proc:planted}
	&\bullet \text{ Sample $v^0 \in \{0,1\}^n$ uniformly at random.}\\
	\nonumber &\bullet \text{ Then include in $F$ each $k$-clause which is satisfied by $v^0$ independently with probability}\\ 
	\nonumber &\hspace{15pt} \text{$ p={\alpha n}/N$.}
\end{align}
We use the notation $F(n,\alpha)$ to denote a formula obtained by this process.
% For simplicity, in what follows we restrict our attention to $k$-SAT; the extension to $k$-NAESAT and $k$-XORSAT is easy.
By construction such a formula is always satisfied by the assignment $v_i=v_i^0$; the vector $v^0$ is known as the planted solution.
Let $Z(F)$ denote the cardinality of the set of assignments to $v_1,\dots,v_n$ which satisfy $F$.
Notice that we always have $Z(F(n,\alpha) \geq 1$ by construction.
Again we view $F(n,\alpha)$ as an element of $\{0,1\}^N$ but 
observe that the distribution in this case satisfies $\mu_p(F) = \prob [F(n,\alpha) = F] = \tfrac{Z(F)}{2^n}p^{|F|}(1-p)^{\binom nk (2^k-1) - |F|} $ so $\mu_p$ is not a product measure here.
% We remark that in the limit the number of clauses in $G(n,\alpha)$ or $F(n,\alpha)$ is distributed as a Poisson random variable with mean $\alpha n$ by the standard binomial Poisson approximation.

\subsection{CSPs arising from Goldreich's one-way function candidate}\label{sec:goldmodel}
The CSPs introduced in the previous section have clause functions taking a few specific forms.
In these examples a satisfying variable assignment $y[V]$ satisfies $\chi_e(y[e])=1, \forall e$,
and the clause functions on individual edges are independent of one another.
Our concentration results can be extended to a related class of CSPs which are related to Goldreich's proposed one-way function \cite{Go00}.
The idea is that we can consider CSPs with arbitrary clause functions if the clauses on different edges are related in a specific way.

In \cite{Go00} Goldreich proposed a candidate one-way function family which exploits the difficulty of recovering a solution to a form of planted CSP.
\footnote{One-way functions are important objects in cryptography and complexity theory. Intuitively these are functions that are computationally easy to evaluate, but hard to invert. For a thorough discussion see \cite{Go05}, \cite{Go00}. }
Goldreich's original proposition was that the following function $f$ is one-way.
As always we work with the variable set $V = \{v_1,\dots,v_n\}$.
\begin{itemize}
	\item Select a predicate $\chi: \{0,1\}^k \rightarrow \{0,1\}$ uniformly at random from the set of all such Boolean functions.
	\item Draw a sparse Erd\H{o}s-R\'{e}nyi $k$-uniform multi-hypergraph $(V,E)$ with $m$ edges $e_1,\dots,e_m$.
	\item $f:\{0,1\}^n \rightarrow \{0,1\}^m$ is the function with $f(x)_i = \chi(x[e_i])$, i.e. the $i$th output bit is the result of evaluating $\phi$ on the $k$ (ordered) values assigned to the edge $e_i$.
\end{itemize}
More precisely, Goldreich conjectured that $f$ is one-way in the setting where $k=O(\log n)$ and $m=n$, and the graph is a sufficiently good expander, 
% in the sense that every $d$ edges covers at least $d+ \Omega(n)$ variables, 
for most choices of the predicate $\chi$ which is randomly selected and hard-wired into $f$.

With this in mind we can define a class of planted CSPs generated by the following procedure.
\begin{align}\label{proc:gold}
	\nonumber &\bullet \text{ Select a predicate $\chi: \{0,1\}^k \rightarrow \{0,1\}$.}\\
	 &\bullet \text{ Sample $v^0 \in \{0,1\}^n$ uniformly at random.}\\
	\nonumber &\bullet \text{ Then include in $F$ each $k$-clause of the form $e$ with $\chi(y[e]) = \chi(v^0[e])$, }\\
	\nonumber & \text{ \hspace{5pt} with probability $p={\alpha n}/N$.}
\end{align}
Here the edges are ordered subsets of $V$, and so $N = \binom{n}{k}k!$.

%%%%%%%%%%%%%%%%%%%%%%%%%%%%%%%%%%%%%%%%%%%%%%%%%%%%%%%%%%%%%%%%%%%%%%%%%%%%%%%%%%%%%%%%%%
%%%%%%%%%%%%%%%%%%%%%%%%%%%%%%%%%%%%%%%%%%%%%%%%%%%%%%%%%%%%%%%%%%%%%%%%%%%%%%%%%%%%%%%%%%
%%%%%%%%%%%%%%%%%%%%%%%%%%%%%%%%%%%%%% RESULTS %%%%%%%%%%%%%%%%%%%%%%%%%%%%%%%%%%%%%%%%%%%
%%%%%%%%%%%%%%%%%%%%%%%%%%%%%%%%%%%%%%%%%%%%%%%%%%%%%%%%%%%%%%%%%%%%%%%%%%%%%%%%%%%%%%%%%%
%%%%%%%%%%%%%%%%%%%%%%%%%%%%%%%%%%%%%%%%%%%%%%%%%%%%%%%%%%%%%%%%%%%%%%%%%%%%%%%%%%%%%%%%%%

\section{Overview of results}

Recall that for is a CSP formula $F$ (planted or not) we denote by $Z(F)$ the number of satisfying assignments for $F$.
If $\phi \in [0,1]$ we write $Q_n(\alpha,\phi) := \prob [Z(F(n,\alpha)) < 2^{n\phi}]$.

\subsection{Concentration of the number of solutions of planted satisfiability CSPs}
% In \cite{AbMo13RSA} the following concentration result is obtained for the logarithm of the number of solutions.
% \begin{theorem}\cite{AbMo13RSA}
% Let $$\alpha^* = \sup \{\alpha: P_n(\alpha,0) = O(1/(\log n)^{1+\delta})\textrm{, for some fixed }\delta>0\}.$$
% For every $k\geq 2$, there exist a countable set $\mathcal C$ and a function $\phi_s: [0,\alpha^*)\to [0,1]$ such that for every $\alpha \in [0,\alpha^*)\setminus \mathcal C$ and each $\epsilon>0$
% $$\lim_{n\to \infty} P_n(\alpha, \phi_s(\alpha)-\epsilon) = 0 $$
% and
% $$\lim_{n\to \infty} P_n(\alpha, \phi_s(\alpha)+\epsilon) = 1 $$
% \end{theorem}
% The function $\phi_s(\alpha)$ in this case is the limit $\lim_{n\to \infty} \frac 1n \mathbb E [\log Z(G(n,\alpha)) | Z(G(n,\alpha)) \geq 1]$. The main objective of this paper is to obtain analogous results for planted versions of these satisfiability models.

Our main result is the following theorem, which states that for fixed $\alpha\geq 0$ the logarithm of the number of solutions of a random planted formula concentrates, closing the problem left open in \cite{AbMo13RSA}. 
%Let us denote now $\alpha^* := \frac Nn$.
Note that this clears the concentration problem in its most general form: the exponent of the number of solutions of a random planted SAT formula can be asymptotically predicted with an $n$-independent value and for any $k \geq 2$ (small or large). The only part that could be further generalized is the fact that the result does not hold for a countable set of ``bad'' $\alpha$'s, but it is unclear whether this is a technicality or not. The formal result reads as follows.  

\begin{theorem}\label{thm:main}
For every $k\geq 2$, there exist a countable set $\mathcal D$ and a function $\phi_s: [0,\alpha^*] \to [0,1]$ such that for every $\alpha \notin \mathcal D$ and every $\epsilon > 0$,
$$\lim_{n\to \infty} Q_n(\alpha, \phi_s(\alpha)-\epsilon) = 0 $$
$$\lim_{n\to \infty} Q_n(\alpha, \phi_s(\alpha)+\epsilon) = 1 $$
\end{theorem}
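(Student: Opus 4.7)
The plan is to establish the theorem by combining a sharp-threshold argument for the monotone property $A_\phi = \{Z(F) \leq 2^{n\phi}\}$ with the deterministic limit of $\frac{1}{n}\E\log Z$ already available from \cite{abbetoc}. First I would exploit the literal-negation symmetry of the planted model: since the distribution of $F(n,\alpha)$ is invariant under replacing each variable $v_i$ by $v_i \oplus v_i^0$, we may assume without loss of generality that $v^0 = \mathbf{0}$, so that the formula is drawn from a product measure on the $\binom{n}{k}(2^k-1)$ clauses satisfied by $\mathbf{0}$, each present independently with probability $p = \alpha n / N$. The event $A_\phi$ is monotone non-decreasing in the clause set and invariant under the natural $S_n$-action on variables, so for each $\phi$ we may set $\alpha_n^*(\phi) := \inf\{\alpha : \prob[A_\phi(F(n,\alpha))] \geq 1/2\}$.

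The central step is to invoke Bourgain's refinement of Friedgut's sharp-threshold theorem from the appendix of \cite{FrBo99} to conclude that the transition window $\{\alpha : \epsilon \leq \prob[A_\phi] \leq 1-\epsilon\}$ has width $o(1)$ around $\alpha_n^*(\phi)$ for every fixed $\phi$. Friedgut's original theorem would require a transitive symmetry group, which is absent here because the $S_n$-action on the ground set decomposes into orbits indexed by the Hamming weight of the negation pattern; Bourgain's variant accommodates such multi-orbit structures by replacing transitivity with a dichotomy, in which failure of sharpness implies that $A_\phi$ is well-approximated by the event that $F$ contains some fixed $O(1)$-sized subformula $H$. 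To discharge this dichotomy I would argue that conditioning on $F \supset H$ is nearly equivalent to working at the slightly shifted density $\alpha - O(1/n)$, at which $\prob[A_\phi]$ cannot differ by a macroscopic amount from its value at $\alpha$, contradicting the coarse-threshold assumption.

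To convert the $\alpha$-sharpness into $\phi$-concentration at fixed $\alpha$, I would invoke the existence of the limit $\phi_s(\alpha) := \lim_n \frac{1}{n}\E\log Z(F(n,\alpha))$, which follows from \cite{abbetoc} combined with bounded convergence (since $\frac{1}{n}\log Z \in [0,1]$). Sharpness of $\prob[A_\phi]$ at every fixed $\phi$ pins $\frac{1}{n}\E\log Z$ to within $o(1)$ of $\phi$ at $\alpha = \alpha_n^*(\phi)$, so in the limit the monotone function $\phi_s$ and the monotone function $\alpha_n^*(\cdot)$ become functional inverses of one another at continuity points of $\phi_s$. Taking $\mathcal D$ to be the (necessarily countable) set of discontinuities of $\phi_s$, the sharp-threshold bounds applied at $\phi = \phi_s(\alpha) \pm \epsilon$ yield both desired limits $Q_n(\alpha, \phi_s(\alpha) - \epsilon) \to 0$ and $Q_n(\alpha, \phi_s(\alpha) + \epsilon) \to 1$ for $\alpha \notin \mathcal D$.

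The main obstacle I anticipate is the verification of Bourgain's hypothesis in the asymmetric planted setting. Even granting the intuition that conditioning on a bounded subformula barely changes the distribution of a global quantity like $\log Z$, Bourgain's argument is phrased as a delicate quantitative dichotomy, and executing it with the constrained negation patterns $s_e \in \{0,1\}^k \setminus \{v^0[e]\}$ and the multi-orbit $S_n$-action they induce requires a careful adaptation of the unplanted SAT-specific argument of \cite{FrBo99}; this is where the bulk of the technical work lies.
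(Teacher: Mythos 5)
Your high-level roadmap matches the paper's: reduce to $v^0 = 0^n$ to get a product measure, invoke Bourgain's nonsymmetric sharp-threshold theorem to get an $n$-dependent threshold $\alpha_n(\phi)$ for each fixed $\phi$, and then ``freeze'' the threshold using the convergence of $\frac{1}{n}\E\log Z(F(n,\alpha))$ from the earlier planted-concentration result, with the countable exceptional set $\mathcal D$ coming from the monotonicity of $\phi_s$. Those pieces are right. However, your sketch of how to discharge Bourgain's dichotomy contains a genuine gap, and you omit the first case of the dichotomy entirely.

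On the first case: Bourgain's theorem allows the possibility that, with probability $>\delta$, the random formula already contains a small subformula $x'$ that \emph{itself} lies in $\mathcal A_\phi$. You must rule this out, and the reason is specific to the planted setting: any subformula $x'$ of size $O(1)$ touches only $O(1)$ variables and is satisfied by $v^0$, so one can set those $O(1)$ variables to their planted values and freely choose the remaining $n - O(1)$ variables, giving $Z(x')\geq 2^{n-O(1)} > 2^{\phi n}$, hence $x'\notin\mathcal A_\phi$. This is a short argument but it is not something that can be skipped.

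On the second case, your proposed discharge --- ``conditioning on $F\supset H$ is nearly equivalent to working at the slightly shifted density $\alpha - O(1/n)$'' --- is not a valid step, and in fact its direction is backwards. The coarse-threshold hypothesis tells you that a small \emph{density} increment barely moves $\mu_p(\mathcal A_\phi)$; it tells you nothing about the effect of conditioning on a \emph{specific} small formula $H$, and Bourgain's Case 2 is precisely the statement that some such $H$ \emph{does} boost the probability by $\delta$. Asserting that conditioning is equivalent to a density shift would amount to assuming Case 2 is false, which is circular. What the paper actually proves is the converse implication: by permutation symmetry, conditioning on a \emph{uniformly relabeled} copy $x^*$ of $x'$ also gives the $\delta$-boost, and then --- this is the crux --- a random density increment $F^0(n,\ve' p_\gamma)$ \emph{simulates} (with probability bounded away from $0$) the addition of a ``bad'' relabeling $x(v)$. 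Establishing that simulation is where the real work is: one needs the Erd\H{o}s--Simonovits theorem on $T$-complete $k$-tuples to guarantee that, among the $\Theta(\ve' p_\gamma n^k)$ random clauses added, there is a constant probability of finding $t$ clauses whose negation patterns force $t$ variables to their planted values and whose underlying tuple lies in the bad set $T$. Without that combinatorial step the contradiction with the coarse-threshold hypothesis does not materialize.

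Your final ``freezing'' paragraph is essentially correct, though the mechanism for the countable exceptional set is the existence of intervals on which $\phi_s$ is constant (and a monotone function can have only countably many nondegenerate level intervals), rather than strictly the set of discontinuities of $\phi_s$; the paper invokes Froda's theorem for the discontinuities of the resulting inverse, which is a close but slightly different bookkeeping.
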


In \cite{AbMo13}, it was shown that this quantity concentrates when the expectation is taken over the clause distribution. 
We use this result in the proof of Theorem \ref{thm:main}. 

\begin{theorem}\label{thm:conv}\cite{AbMo13}
For every $k\geq 2$, for every $\alpha\in [0,\alpha^*]$ the sequence
$$ \psi_n(\alpha) := \frac 1n \mathbb E [\log Z(F(n,\alpha))]$$
converges almost surely to a limit $\phi_s(\alpha)$.
\end{theorem}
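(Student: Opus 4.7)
The plan is to establish $\psi_n(\alpha) \to \phi_s(\alpha)$ by showing that $a_n := n\psi_n(\alpha) = \mathbb{E}[\log Z(F(n,\alpha))]$ is approximately subadditive in $n$, and then invoking Fekete's lemma for approximately subadditive sequences together with the trivial bounds $0 \le a_n \le n\log 2$. The machinery for the subadditivity is the interpolation method of Guerra--Toninelli and Franz--Leone, in the form adapted to random CSPs by Panchenko--Talagrand and Bayati--Gamarnik--Tetali, modified for the planted setting.

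Two preparatory reductions come first. (a) \emph{Gauge symmetry}: using the uniform distribution of $v^0$ and the transformation $y \mapsto y \oplus v^0$, I reduce to a model where the planted solution is fixed to the all-zeros string and each of the $2^k-1$ clauses on every $k$-tuple consistent with that string is included independently with probability $p$. (b) \emph{Positive temperature}: replace the hard count by
\[
Z_\beta(F) := \sum_{y \in \{0,1\}^n} \prod_{e \in F} \bigl(1 - (1-e^{-\beta})\,\mathbf{1}\{y \text{ violates } e\}\bigr),
\]
which is smooth in the clause-indicator variables and satisfies $\log Z_\beta \downarrow \log Z$ as $\beta \to \infty$.

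The core of the proof is the interpolation. After Poissonizing the clause distribution at a cost of $o(n)$ in $a_n$, split $[n] = V_1 \sqcup V_2$ with $|V_i|=n_i$ and define a family of random planted formulas $F^t$, $t \in [0,1]$, such that $F^1 \stackrel{d}{=} F(n,\alpha)$ and $F^0$ is the independent disjoint union $F(n_1,\alpha) \sqcup F(n_2,\alpha)$. The interpolating family is built from independent Poisson streams of crossing and non-crossing clauses whose intensities are tuned so that the per-vertex expected number of clauses touching any given variable equals $k\alpha$ at every $t$. Differentiating $\varphi_\beta(t) := \mathbb{E}[\log Z_\beta(F^t)]$ via the Poisson calculus produces a linear combination of terms of the form $\mathbb{E}\log\langle e^{-\beta \mathbf{1}\{y \text{ violates } e\}}\rangle_t$, where $\langle \cdot \rangle_t$ denotes the Gibbs average under $\mu_{F^t}$. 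Expanding $\log(1-(1-e^{-\beta})x)$ in a Taylor series in $x \in [0,1]$ with coefficients of a fixed sign, $\varphi_\beta'(t)$ becomes a signed sum of multi-overlap moments under $\langle\cdot\rangle_t$, and a Franz--Leone-type convexity argument shows the sum has a definite sign, giving $\varphi_\beta(1) \le \varphi_\beta(0)$.

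Integrating in $t$, passing $\beta \to \infty$ by monotone convergence on each side, and undoing the Poissonization yields $a_n \le a_{n_1} + a_{n_2} + o(n)$; Fekete's lemma then produces a limit $\phi_s(\alpha) = \inf_n \psi_n(\alpha)$. The main technical obstacle is the sign analysis in the derivative computation: the derivative contains contributions from overlap moments of all orders, and although the planted gauge symmetry simplifies the bookkeeping (the Gibbs measure is centered on the all-zeros assignment, and the clause weights are convex in the appropriate indicators), one must still verify that the multilinear sign pattern in the Taylor expansion aligns with the combinatorial prefactors coming from the Poisson differentiation. A secondary care point is justifying the exchange of the $\beta \to \infty$ limit with the integrated inequality, which is routine via monotone convergence but must be carried out for the hard partition function and for each clause-type term separately.
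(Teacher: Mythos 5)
The paper does not prove this statement; Theorem~\ref{thm:conv} is imported from \cite{AbMo13}, and the paper later notes that its proof (in the planted extension given in \cite{AbMo13RSA}) is an interpolation argument. Your outline---gauge to a fixed planted solution, go to a positive-temperature smoothing, Poissonize, interpolate \`a la Guerra--Toninelli/Franz--Leone/Bayati--Gamarnik--Tetali, and finish with Fekete---is essentially the route the cited work follows, so your overall strategy matches what the paper says about the reference.

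The substantive gap is the step you yourself flag as the ``main technical obstacle'': the sign of the interpolation derivative. In the non-planted setting the Franz--Leone argument gets a definite sign from convexity of a specific overlap functional, but in the planted setting this becomes a genuine hypothesis on the clause functions that neither the planting nor the gauge reduction delivers for free. This is exactly what the paper isolates as Hypothesis~H (Definition~\ref{def:h-gold}): for every $\ell\ge 1$, convexity in $\nu$ of the operator $\Gamma_\ell$ over probability measures on $\{0,1\}^k$. The paper emphasizes that this condition is nontrivial and model-dependent---they cannot verify it for many Goldreich predicates, and report a computer search showing that for $k\le 5$ no antisymmetric predicate satisfies both the balance properties and Hypothesis~H. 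So you cannot wave at ``a Franz--Leone-type convexity argument'' as though it were automatic; a complete proof must explicitly verify Hypothesis~H for the SAT/NAESAT/XORSAT clause families, which is what \cite{AbMo13RSA} does. A secondary point: the ``almost surely'' in the statement reflects that in the cited model $\psi_n$ is taken only over the clause/planting randomness with the hypergraph left random, so in addition to the sub/super-additivity argument establishing convergence of the full expectation, one also needs an Azuma-type concentration step of the kind the paper carries out separately in Section~\ref{sec:goldreichcurve}; your sketch does not mention this.
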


As an intermediate step toward Theorem \ref{thm:main}, we will prove that for fixed $\phi\in [0,1]$ there is a sharp threshold density for the property of having fewer than $2^{n\phi}$ solutions (we define these terms in Section \ref{sec:bourgain}).
First, in Section \ref{sec:planted} we prove the following $n$-dependent sharp threshold.

\begin{lemma}\label{lem:sharp}
For every $k\geq 2$ and for every $\phi \in [0,1)$ there exists a sequence $\{\alpha_n(\phi) \}_{n \in \mathbb Z_{>0}}$ such that for every $\epsilon>0$,
$$\lim_{n\to \infty} Q_n(\alpha_n(\phi)-\epsilon, \phi) = 0 $$
$$\lim_{n\to \infty} Q_n(\alpha_n(\phi)+\epsilon, \phi) = 1. $$
\end{lemma}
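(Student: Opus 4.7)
The plan is to apply Bourgain's sharp threshold theorem from the appendix of \cite{FrBo99} to the monotone event $A_{n,\phi}:=\{F:Z(F)<2^{n\phi}\}$, which is monotone increasing in the clause set since adding clauses can only eliminate satisfying assignments. To fit Bourgain's product-measure framework I first condition on the planted assignment $v^0$. For each of $k$-SAT, $k$-NAESAT and $k$-XORSAT the clause functions are covariant under flipping arbitrary subsets of variables while $Z(\cdot)$ is invariant under such flips, so the conditional law of $Z(F(n,\alpha))$ given $v^0$ does not depend on $v^0$; I fix $v^0=0^n$. Under this conditioning the random formula is drawn from the product measure $\mu_p$ on the subset $\Omega_0\subseteq C_k(n)$ of clauses satisfied by $0^n$, with $p=\alpha n/N$, and $A_{n,\phi}$ becomes a monotone event in $\{0,1\}^{|\Omega_0|}$ which is additionally $S_n$-invariant (permutations of variables preserve $v^0=0^n$).

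Because $Q_n(\cdot,\phi)$ is continuous and monotone increasing in $\alpha$, I define $\alpha_n(\phi):=\inf\{\alpha:Q_n(\alpha,\phi)\ge 1/2\}$. Suppose toward contradiction that the threshold is coarse: for some $\eps_0>0$ and infinitely many $n$, either $Q_n(\alpha_n-\eps_0,\phi)\ge\eps_0$ or $Q_n(\alpha_n+\eps_0,\phi)\le 1-\eps_0$. Then by Margulis--Russo the derivative $\tfrac{d}{d\log p}\mu_p(A_{n,\phi})$ is uniformly bounded along this subsequence, which is precisely Bourgain's hypothesis. His theorem produces constants $C,\delta>0$ (independent of $n$) and, for each such $n$, a deterministic set $B_n\subseteq\Omega_0$ with $|B_n|\le C$ such that
\[
\mu_p\!\left(A_{n,\phi}\mid B_n\subseteq F\right)\ \ge\ \mu_p(A_{n,\phi})+\delta,
\]
i.e.\ $B_n$ is a bounded-size \emph{booster} for the event of having fewer than $2^{n\phi}$ satisfying assignments.

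The final step, and the main obstacle, is to rule out such a booster by exploiting the $S_n$-symmetry of $\mu_p$ and of $A_{n,\phi}$ once $v^0=0^n$ is fixed. For every $\sigma\in S_n$ the translate $\sigma(B_n)$ is again a booster with the same constants $C,\delta$, so averaging over $\ell=\ell(n)$ independent uniform permutations $\sigma_1,\ldots,\sigma_\ell$ yields a random ``symmetric'' booster $B^\star=\bigcup_{i\le\ell}\sigma_i(B_n)$ of size at most $\ell C$. Taking $\ell=\omega(1)$ with $\ell=o(n)$, a stochastic-domination argument shows that the distribution of $F\cup B^\star$ is dominated by that of a planted formula at density $\alpha+O(\ell/n)=\alpha+o(1)$, so that $\mu_p(A_{n,\phi}\mid B^\star\subseteq F)\le Q_n(\alpha+o(1),\phi)$; combined with the Margulis--Russo bound on $\tfrac{d}{d\log\alpha}Q_n(\cdot,\phi)$ this stays within $o(1)$ of $Q_n(\alpha_n,\phi)=1/2$ along the bad subsequence. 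On the other hand, tensorising the single-booster inequality over the $\ell$ independent translates drives $\mu_p(A_{n,\phi}\mid B^\star\subseteq F)\to 1$ as $\ell\to\infty$, a contradiction that establishes the sharp threshold. The technical heart is in making the amplification step quantitative: one must show that forcing $\ell$ randomly permuted copies of $B_n$ really does tensorise the boost despite possible overlaps among the $\sigma_i(B_n)$ in $\Omega_0$, and that the stochastic comparison between the product measure on $\Omega_0$ conditioned on $B^\star$ and the product measure at a slightly larger density is legitimate given the planted conditioning on $v^0$.
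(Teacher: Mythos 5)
Your plan is aligned with the paper's high-level strategy (condition on $v^0=0^n$ to obtain a product measure, invoke Bourgain, derive a contradiction from a coarse threshold), but it breaks down exactly where you flag the ``technical heart'': the amplification step.

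Two issues. First, you silently merge the two conclusions of Bourgain's theorem into ``a bounded booster $B_n$ exists.'' Bourgain's theorem in \cite{FrBo99} gives a dichotomy: either (i) a small $x' \in \mathcal A_\phi$ is contained in $F$ with probability $>\delta$, or (ii) there is a small $x' \notin \mathcal A_\phi$ with $\mu_p(\mathcal A_\phi \mid x' \subseteq F) > \gamma + \delta$. Case (i) must be ruled out separately; the paper does so by noting any formula on at most $10Ck$ variables and satisfied by the planting has at least $2^{n-10Ck} > 2^{\phi n}$ solutions, hence is not in $\mathcal A_\phi$. That part is easy but cannot be skipped, because for many monotone properties Case (i) \emph{does} occur and the threshold genuinely is coarse.

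The more serious gap is in Case (ii). You want to plant $\ell$ independently permuted copies of $B_n$ and claim the boost ``tensorises,'' driving $\mu_p(\mathcal A_\phi\mid B^\star\subseteq F)\to 1$. There is no reason for this to hold: the conditional events $\{F\in\mathcal A_\phi\mid \sigma_i(B_n)\subseteq F\}$ are not independent, and the boost $\delta$ for a single translate does not compound in the way $1-(1-\delta)^\ell$ would require. This multiplicativity is precisely what does \emph{not} come for free, and is the reason Friedgut-style arguments cannot stop at the existence of a booster. The paper instead analyzes \emph{why} the boost happens: conditioning on a copy $x(v)$ of the booster and assigning planted values to its variables yields a ``bad'' $t$-tuple, a constant fraction of all $t$-tuples are bad, and by Erd\H{o}s--Simonovits (Theorem \ref{thm:es}) a random choice of $t$ disjoint $k$-tuples is $T$-complete with positive probability. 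Then a constant fraction of random clauses supported on such a $T$-complete family have the special form that forces at least one variable in each clause to its planted value, which is at least as restrictive as containing $x(v)$ for some bad $v$. Combined with the two-round exposure and a first-moment bound on $Z$ for the extra clauses, this forces $H_0 \cup F^0(n,\ve'p_\gamma)$ into $\mathcal A_\phi$ with probability $>1-\delta/2$, contradicting inequality \eqref{eq:H}. This is a concrete combinatorial amplification, not an independence-based one; your stochastic-domination comparison of $F\cup B^\star$ to a slightly denser planted formula is also not legitimate as stated, since conditioning on $\sigma_i(B_n)\subseteq F$ for random $\sigma_i$ changes the measure in a way that does not coincide with increasing $p$ by $O(\ell/n)$. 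In short, the proposal correctly identifies the framework but does not supply the mechanism (bad tuples plus Erd\H{o}s--Simonovits plus counting) that actually closes the argument.
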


In fact, we prove Lemma \ref{lem:sharp} for a larger class of planted CSPs, namely those which arise from Goldreich's one-way function candidate \cite{Go00}.
This allows us to deduce the analogous statement of Theorem \ref{thm:main} for certain instances of these CSPs, as well as an $n$-dependent version of it in general.

In Section \ref{sec:freeze} we combine Lemma \ref{lem:sharp} with Theorem \ref{thm:conv} using a technique from \cite{AbMo13RSA} to show that the sequence $\alpha_n(\phi)$ converges. 

\begin{theorem}\label{thm:freeze}
For every $k\geq 2$, there exist a countable set $\mathcal C$ and a function $\phi_s:[0,\alpha^*) \to [0,1]$ such that for each $\phi\in \phi_s([0,\infty))$ there exists $\alpha_s(\phi)$ such that for each $\epsilon>0$,
$$\lim_{n\to \infty}  Q_n(\alpha_s(\phi) - \epsilon, \phi) = 0$$
and
$$\lim_{n\to \infty}  Q_n(\alpha_s(\phi) + \epsilon, \phi) = 1$$
\end{theorem}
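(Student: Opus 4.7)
The strategy is to combine the $n$-dependent sharp threshold $\alpha_n(\phi)$ from Lemma~\ref{lem:sharp} with the convergence of expected log-counts in Theorem~\ref{thm:conv}, via a ``squeeze'' argument (essentially the technique of~\cite{AbMo13RSA}) that forces any two subsequential limits of $\alpha_n(\phi)$ to coincide, except when $\phi$ lies in a countable exceptional set $\mathcal{C}$.

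First I would record a monotonicity property inherited from the planted model: adding clauses never increases $Z(F)$, so $\alpha \mapsto \psi_n(\alpha)$ is non-increasing, and hence its pointwise limit $\phi_s$ is a non-increasing function on $[0,\alpha^*)$. A monotone function is constant on a nontrivial interval for at most countably many output values, so I would define $\mathcal{C}$ to be exactly this countable set of ``plateau values'' of $\phi_s$. For $\phi \in \phi_s([0,\alpha^*)) \setminus \mathcal{C}$, the preimage $\phi_s^{-1}(\phi)$ is a single point, which we name $\alpha_s(\phi)$.

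The heart of the argument is to show $\alpha_n(\phi) \to \alpha_s(\phi)$ for such $\phi$. Suppose for contradiction that $a < b$ are two subsequential limits of $\{\alpha_n(\phi)\}$ and fix any $\alpha' \in (a,b)$. Along a subsequence with $\alpha_n(\phi) \to a$, Lemma~\ref{lem:sharp} gives $Q_n(\alpha',\phi) \to 1$, and since $\tfrac{1}{n}\log Z$ is bounded above by $\log 2$, standard conditioning yields $\limsup \psi_n(\alpha') \leq \phi$. Along the subsequence with $\alpha_n(\phi) \to b$, Lemma~\ref{lem:sharp} gives instead $Q_n(\alpha',\phi) \to 0$, and using $\log Z \geq 0$ (valid because the planting guarantees $Z \geq 1$) we obtain $\liminf \psi_n(\alpha') \geq \phi$. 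But Theorem~\ref{thm:conv} says the whole sequence $\psi_n(\alpha')$ converges to $\phi_s(\alpha')$, so $\phi_s(\alpha') = \phi$ for every $\alpha' \in (a,b)$, contradicting $\phi \notin \mathcal{C}$. Once $\alpha_n(\phi) \to \alpha_s(\phi)$ is established, the two displayed limits of the theorem follow immediately from Lemma~\ref{lem:sharp} by taking $n$ large enough that $\alpha_n(\phi)$ lies within $\epsilon/2$ of $\alpha_s(\phi)$.

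The key technical hinge, and what I expect to be the main obstacle, is the conversion of the sharp-threshold probability statements of Lemma~\ref{lem:sharp} into bounds on the expectation $\psi_n$ required by the squeeze. This conversion relies crucially on the two-sided uniform bounds $0 \leq \tfrac{1}{n}\log Z(F(n,\alpha)) \leq \log 2$ available in the planted setting: the upper bound lets the tail outside the sharp-threshold event be absorbed into $o(1)$ error, while the lower bound (which was unavailable in the non-planted model that forced the ``$1+$'' term of~\cite{AbMo13RSA}) is what makes the $\liminf$ direction work cleanly and closes the argument.
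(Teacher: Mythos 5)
Your proposal is correct and takes essentially the same route as the paper: the paper likewise fixes $\phi_0$, extracts two subsequences along which $\alpha_n(\phi_0)$ tends to $\liminf$ and $\limsup$, uses Lemma~\ref{lem:sharp} together with the two-sided bound $0\le \tfrac 1n\log_2 Z\le 1$ to force $\phi_s\equiv\phi_0$ on the intermediate interval, and then invokes monotonicity of $\phi_s$ (Froda's theorem) to conclude that the exceptional set of plateau values is countable. The only cosmetic differences are that you define $\mathcal C$ and $\alpha_s(\phi)$ up front before running the contradiction, while the paper derives countability at the end.
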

We deduce Theorem \ref{thm:main} from Theorem \ref{thm:freeze} in Section \ref{sec:freeze}.

\subsection{Concentration of the number of solutions of CSPs from Goldreich's one-way function candidates}
We now present concentration results for the number of solutions of the CSPs arising from Goldreich's one-way function candidates described in Section \ref{sec:goldmodel}.

If one considers the logarithm of the number of solutions of the one-way function candidate determined by a random graph $G$, a predicate $\chi$ and a uniform input, and takes the average over the input distribution, it is possible to obtain the following concentration result. 
Note that this gives a stronger concentration notion, i.e., almost sure and for every $\alpha$, and imposes no restriction on the choice of $\chi$. 
However, it provides a $n$-dependent threshold and requires averaging over the input distribution.  
%In fact we may drop it and obtain the following $n$-dependent version of Theorem \ref{thm:main}.

\begin{lemma}\label{thm:goldreichcurve}
Let $F(n,\alpha)$ be a formula drawn as in \eqref{proc:gold} %(with $\chi$ not necessarily antisymmetric).
Then for every $k\geq 2$, there exist a function $\phi_s^n: [0,\alpha^*] \to [0,1]$, namely $\phi_s^n=\E_{G,v^0} \log Z(F(n,\alpha))$, such that for every $\alpha >0$ and every $\epsilon > 0$, the following holds almost surely 
$$\lim_{n \to \infty} (\E_{v^0} \log Z(F(n,\alpha))-\E_{G,v^0} \log Z(F(n,\alpha))) = 0.$$
%$$\lim_{n\to \infty} Q_n(\alpha, \phi_s^n(\alpha)-\epsilon) = 0 $$
%$$\lim_{n\to \infty} Q_n(\alpha, \phi_s^n(\alpha)+\epsilon) = 1 $$
\end{lemma}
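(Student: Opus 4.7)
The plan is to establish concentration of $\frac{1}{n}\E_{v^0}[\log Z(F(G,v^0))]$ around its mean $\frac{1}{n}\E_{G,v^0}[\log Z]$ by applying a Doob-martingale inequality to the random graph $G$, and then upgrading the resulting tail bound to almost-sure convergence via Borel-Cantelli. Concretely, I would parametrize $G$ as a family of $N = \binom{n}{k}k!$ independent Bernoulli$(p)$ edge-indicators $B_1,\ldots,B_N$ with $p = \alpha n/N$, and build the edge-exposure martingale $M_i = \E[f(G)\mid B_1,\ldots,B_i]$ for $f(G):=\E_{v^0}\log Z(F(G,v^0))$, so that $M_0 = \E_G f(G)$ and $M_N = f(G)$.

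The key quantities are the one-step differences $|M_i-M_{i-1}| \leq |A_i^{(1)}-A_i^{(0)}|$, where $A_i^{(b)} := \E[f(G)\mid B_1,\ldots,B_{i-1},B_i=b]$. A worst-case bound gives only $|A_i^{(1)}-A_i^{(0)}| \leq n\log 2$, since $0\leq \log Z \leq n\log 2$; with this, direct Azuma-Hoeffding controls fluctuations only at the coarse scale $O(n)$ and loses the intended normalization. To tighten the estimate I would exploit the following telescoping identity: for each fixed $v^0$, writing $q_{v^0}(e):=\pp_{y\sim S(G,v^0)}[\chi(y[e])=\chi(v^0[e])]$ for the fraction of solutions of $F(G,v^0)$ consistent with the clause on $e$, we have $\log Z(F(G,v^0)) = n\log 2 + \sum_{e\in G}\log q_{v^0}(e)$, so $\sum_{e\in G}|\log q_{v^0}(e)|\leq n\log 2$. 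Combined with the fact that each $B_i$ is Bernoulli with small $p$ (making the conditional variance of a step of order $p(A_i^{(1)}-A_i^{(0)})^2$), this aggregate bound feeds into a Freedman/Bernstein-type inequality and should yield $\pp[|f(G)-\E f|>\varepsilon n]\leq e^{-\Omega(n\varepsilon^2)}$, which is summable in $n$ and hence gives the almost-sure statement.

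The main obstacle is the passage from the $\ell^1$-control offered by the telescoping identity to the $\ell^2$-bound on $\sum_i(A_i^{(1)}-A_i^{(0)})^2$ required by the variance-based concentration inequality. Worst-case ``spikes''---edges $e$ and planted vectors $v^0$ for which $q_{v^0}(e)$ is exponentially small in $n$, arising when the new clause nearly isolates $v^0$ in its local solution cluster---can dominate the $\ell^2$ norm while contributing little to the $\ell^1$ norm. The plan to handle this would be to argue that, for a uniformly random $v^0$ and at sparse density $\alpha=O(1)$, such spikes are rare enough (both in edge-count and in $v^0$-measure) that their total contribution to the squared sum is $o(n^2)$, yielding concentration at the normalized scale. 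This is precisely the point where the argument touches the structural questions about the solution-cluster geometry flagged in the introduction, and where a Talagrand convex-distance inequality or a mild regularity assumption on the predicate $\chi$ may be needed to push the proof through in full generality.
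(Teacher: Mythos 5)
Your overall plan (Doob martingale for $f(G) := \E_{v^0}\log Z(F(G,v^0))$, concentration inequality, then Borel--Cantelli for almost-sure convergence) matches the paper's structure, and you are right that a naive worst-case increment bound of $n\log 2$ is useless. But the ``gap'' you flag at the end---the passage from $\ell^1$ control to $\ell^2$ control, and the worry about spikes where $q_{v^0}(e)$ is exponentially small---is not actually there: you are carrying the expectation over $v^0$ along, but you are not using it to bound the increment. The key observation in the paper is that the per-edge Lipschitz constant of $f$ is already $O(1)$, \emph{after} the average over $v^0$. Concretely, for a single edge $e$,
\begin{align*}
0 \;\le\; \E_{v^0}\!\left[\log\frac{Z(G,v^0)}{Z(G\cup e,v^0)}\right]
\;\le\; \log \E_{v^0}\!\left[\frac{Z(G,v^0)}{Z(G\cup e,v^0)}\right]
\;\le\; \log 2 ,
\end{align*}
where the first step is monotonicity of $Z$ in the clause set and nonnegativity of the ratio (note $Z(G\cup e,v^0)\ge 1$ always, since $v^0$ is planted, so the log is finite), the middle step is Jensen, and the last step is a collision-probability computation: each level set $[y]_G$ of the output map on $G$ is split by the new constraint into two parts of sizes $Z_0,Z_1$, and a direct sum over $v^0\in\{0,1\}^n$ shows $\E_{v^0}[Z(G,v^0)/Z(G\cup e,v^0)]\le 2$. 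The individual $\log q_{v^0}(e)$ can indeed be very negative for unlucky $v^0$, but their average cannot, and this is exactly what Jensen captures.

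Once you have the uniform $O(1)$ Lipschitz bound per edge, the rest is elementary and you do not need Freedman/Bernstein or any structural regularity of the solution clusters. The paper sidesteps your ``$N$ Bernoulli steps'' problem entirely by conditioning on the number of edges $m$ (which concentrates around $\alpha n$) and running the vertex-exposure-style martingale that reveals the $m$ edges one at a time; with $m=\Theta(n)$ steps and increment $\le\log 2$, Azuma gives $\pp[\,|f(G)-\E_G f(G)|\ge n\eps\,]\le 2e^{-n\eps^2/2}$, which is summable in $n$. Your Freedman route on the $N$-step Bernoulli martingale would also close, since the conditional step variance is $O(p)\cdot(\log 2)^2$ and $pN=\alpha n$, but it is more machinery than necessary. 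In short: the tools you chose are the right ones, and the only missing ingredient is the Jensen/collision-probability step that converts the averaging over $v^0$ into a constant bounded-difference bound, which dissolves the obstacle you identified.
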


The proof is found in Appendix Section \ref{sec:goldreichcurve}. 

We can dispose of the dependence of $\phi_s$ on $n$ and on the averaging of the input in the previous theorem for certain choices of $\chi$.
We simply need to remark that Theorem \ref{thm:conv} was in fact shown in \cite{AbMo13RSA} to hold for planted formulas $F(n,\alpha)$ which satisfy a certain convexity hypothesis (let's call it $H$ for now), then the proof of the following theorem follows that of Theorem \ref{thm:main} exactly as in Section \ref{sec:freeze}.
To this end, in the Appendix Section \ref{sec:sharpgold} we prove the analogue of Lemma \ref{lem:sharp} for these CSPs.
This allows us to deduce the analogous statement of Theorem \ref{thm:main} for certain instances of these CSPs.

We say that a predicate $\chi: \{0,1\}^k \rightarrow \{0,1\}$ is \emph{balanced} if it evaluates to $1$ on exactly half of the inputs and we say $\chi$ is antisymmetric if $\chi(x) = 1-\chi(\overline{x})$ for some $x \in \{0,1\}^k$.
\begin{theorem}\label{thm:goldreichconvex}
Let $F(n,\alpha)$ be a formula drawn as in \ref{proc:gold}, with a predicate $\chi$ which is antisymmetric and satisfies Hypothesis $H$.
Then for every $k\geq 2$, there exist a countable set $\mathcal D$ and a function $\phi_s: [0,\alpha^*] \to [0,1]$ such that for every $\alpha \notin \mathcal D$ and every $\epsilon > 0$,
$$\lim_{n\to \infty} Q_n(\alpha, \phi_s(\alpha)-\epsilon) = 0 $$
$$\lim_{n\to \infty} Q_n(\alpha, \phi_s(\alpha)+\epsilon) = 1 $$
\end{theorem}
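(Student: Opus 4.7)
The plan is to mirror the proof structure of Theorem~\ref{thm:main} verbatim, substituting the two key ingredients with their Goldreich-CSP analogues. Recall that Theorem~\ref{thm:main} was obtained by combining Lemma~\ref{lem:sharp} (an $n$-dependent sharp threshold for having fewer than $2^{n\phi}$ solutions) with Theorem~\ref{thm:conv} (convergence of $\frac{1}{n}\E\log Z$), and then feeding the result into the argument of Section~\ref{sec:freeze} (Theorem~\ref{thm:freeze}). So my job reduces to: (i) justify the two analogues under the hypotheses stated, and (ii) observe that the combination step is formal and goes through unchanged.

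For the convergence input, I would invoke the extension of Theorem~\ref{thm:conv} to planted formulas satisfying Hypothesis $H$ established in \cite{AbMo13RSA}. Since Hypothesis~$H$ is precisely the convexity condition that makes the Guerra--Toninelli interpolation argument close, and we have assumed it of $\chi$, we obtain for the Goldreich ensemble \eqref{proc:gold} the existence of a deterministic limit $\phi_s(\alpha) := \lim_n \frac{1}{n}\E\log Z(F(n,\alpha))$ for every $\alpha \in [0,\alpha^*]$. No new work is needed on this front beyond citing the interpolation result under $H$.

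For the sharp threshold input, I would appeal to the analogue of Lemma~\ref{lem:sharp} for the Goldreich model, proved in the appendix section~\ref{sec:sharpgold}. The proof of Lemma~\ref{lem:sharp} in Section~\ref{sec:planted} is already advertised to work for a broader class that includes the Goldreich CSPs, the mechanism being Bourgain's sharp-threshold criterion from the appendix of \cite{FrBo99}, applied to the event $\{Z(F(n,\alpha)) < 2^{n\phi}\}$ viewed as a monotone event in the product measure $\mu_p$ after conditioning on the planted assignment $v^0$. The antisymmetry hypothesis on $\chi$ is what I would use to restore enough symmetry in the conditional clause distribution: when $\chi(x) = 1 - \chi(\overline{x})$, the set of ``satisfying clauses'' is invariant under a global involution on the literal space compatible with the planting, which is exactly what Bourgain's theorem needs to rule out a local cause of non-sharpness. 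This yields a sequence $\alpha_n(\phi)$ with the $n$-dependent threshold property of Lemma~\ref{lem:sharp}.

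With both ingredients in hand, I would run the Section~\ref{sec:freeze} argument: combine the $n$-dependent thresholds $\alpha_n(\phi)$ with the convergence of $\psi_n$ to $\phi_s$ to show that $\alpha_n(\phi)$ is asymptotically equal to $\alpha_s(\phi)$ for all but a countable set of $\phi$'s (the countable set arising from points where $\phi_s$ is not strictly monotone/continuous), and then invert $\alpha_s$ to obtain $\phi_s(\alpha)$ outside a countable exceptional set $\mathcal{D} \subset [0,\alpha^*]$. The inversion and the handling of the exceptional set are identical, word for word, to the derivation of Theorem~\ref{thm:main} from Theorem~\ref{thm:freeze}. The principal obstacle -- and the reason the theorem is restricted to antisymmetric $\chi$ satisfying $H$ -- lies in the sharp-threshold step: Bourgain's criterion is delicate for planted models with an arbitrary predicate because the planting breaks the natural literal-flip symmetry, and without antisymmetry (or an analogous symmetrization) we have no handle to exclude ``local'' obstructions to sharpness. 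Everything else is essentially bookkeeping after invoking the appendix lemmas.
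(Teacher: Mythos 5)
Your overall structure matches the paper's exactly: combine the Goldreich analogue of Lemma~\ref{lem:sharp} (proved in Appendix Section~\ref{sec:sharpgold} as Lemma~\ref{lem:sharppropertygold}) with the extension of Theorem~\ref{thm:conv} under Hypothesis~$H$, then feed both into the Section~\ref{sec:freeze} freezing argument, verbatim. That is precisely the reduction the paper carries out.

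One conceptual point deserves correction, though it does not invalidate your reduction. You assert that antisymmetry of $\chi$ is needed ``to restore enough symmetry'' in the conditional clause distribution so that Bourgain's theorem can be applied, and that without it there is ``no handle to exclude local obstructions to sharpness.'' This misreads the role of Bourgain's theorem: Theorem~\ref{thm:bourgain} requires \emph{no} symmetry hypothesis whatsoever -- that is the entire reason the paper uses it in place of Friedgut's theorem, since the planting already destroys the transitive symmetry Friedgut needs. In the actual proof of Lemma~\ref{lem:sharppropertygold}, antisymmetry appears much later, in the counting step of Case~2: after Erd\H{o}s--Simonovits produces a $T$-complete family of $k$-tuples, one must argue that each of the added clauses of the form $\chi(v_{i_1}\dots v_{i_k}) = \chi(v^0_{i_1}\dots v^0_{i_k})$ forces at least one variable to its planted value, and this is the deduction that uses antisymmetry of $\chi$. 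It is a technical device for that one combinatorial inference, not a precondition for invoking Bourgain, and the paper itself flags that the antisymmetry restriction ``does not seem necessary'' for the theorem to hold.
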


The hypothesis $H$, stated in terms of $\chi$ is as follows.

\begin{dfn}\label{def:h-gold}
Let $M_1(\{0,1\}^k)$ denote the space of probability measures on $\{0,1\}^k$. Let $\ell \geq 1$. Define $\Gamma: M_1(\{0,1\}^k) \rightarrow \mathbb{R}$ by
$$\nu \mapsto \Gamma_{\ell}(\nu) = \frac{1}{2} \sum_{\substack{u^{(1)}, \dots,u^{(\ell)} \in \{0,1\}^k \\ \chi(u^{(1)}) = \dots = \chi(u^{(\ell)})}}  \prod_{i=1}^k \nu(u_i^{(1)},\dots,u_i^{(\ell)})$$
\end{dfn}

\begin{hyp}
For each $\ell \geq 1$, the operator $\Gamma$ is convex in $\nu$.
\end{hyp}
 
Bogdanov and Qiao showed in \cite{BoQi09} that for many choices of $\chi$, Goldreich's function can be inverted with high probability when $m$ is larger than $n$ by a sufficiently large constant factor.
In particular any $\chi$ which is not balanced or whose output correlates with one or two bits of the input is a bad choice when $m=Dn$ for sufficiently large constant $D$.
Their result suggests that if we want the resulting function to be one-way then we may want $\chi$ to be balanced and not correlated with any bit or pair of bits of the input, but it is unclear whether these would be necessary in the regime $m=n$, the one Goldreich originally suggested.

Strictly speaking, the restriction to antisymmetric $\chi$ in Theorem \ref{thm:goldreichconvex} does not seem necessary.
It is a technical condition which arises in the proof of Lemma \ref{lem:sharp}.
We have verified using a computer search that when $k\leq 5$ no antisymmetric function satisfies the balance properties along with Hypothesis $H$ but it remains unclear to us whether such a function can exist in general.

%%%%%%%%%%%%%%%%%%%%%%%%%%%%%%%%%%%%%%%%%%%%%%%%%%%%%%%%%%%%%%%%%%%%%%%%%%%%%%%%%%%%%%%%%%
%%%%%%%%%%%%%%%%%%%%%%%%%%%%%%%%%%%%%%%%%%%%%%%%%%%%%%%%%%%%%%%%%%%%%%%%%%%%%%%%%%%%%%%%%%
%%%%%%%%%%%%%%%%%%%%%%%%%%%%%%%%%%%%%% RELATED %%%%%%%%%%%%%%%%%%%%%%%%%%%%%%%%%%%%%%%%%%%
%%%%%%%%%%%%%%%%%%%%%%%%%%%%%%%%%%%%%%%%%%%%%%%%%%%%%%%%%%%%%%%%%%%%%%%%%%%%%%%%%%%%%%%%%%
%%%%%%%%%%%%%%%%%%%%%%%%%%%%%%%%%%%%%%%%%%%%%%%%%%%%%%%%%%%%%%%%%%%%%%%%%%%%%%%%%%%%%%%%%%

%\section{Related work}
%If you want to make this a section.
%this in the intro now

%%%%%%%%%%%%%%%%%%%%%%%%%%%%%%%%%%%%%%%%%%%%%%%%%%%%%%%%%%%%%%%%%%%%%%%%%%%%%%%%%%%%%%%%%%
%%%%%%%%%%%%%%%%%%%%%%%%%%%%%%%%%%%%%%%%%%%%%%%%%%%%%%%%%%%%%%%%%%%%%%%%%%%%%%%%%%%%%%%%%%
%%%%%%%%%%%%%%%%%%%%%%%%%%%%%%%%%%%%%% PROOFSKETCH %%%%%%%%%%%%%%%%%%%%%%%%%%%%%%%%%%%%%%%
%%%%%%%%%%%%%%%%%%%%%%%%%%%%%%%%%%%%%%%%%%%%%%%%%%%%%%%%%%%%%%%%%%%%%%%%%%%%%%%%%%%%%%%%%%
%%%%%%%%%%%%%%%%%%%%%%%%%%%%%%%%%%%%%%%%%%%%%%%%%%%%%%%%%%%%%%%%%%%%%%%%%%%%%%%%%%%%%%%%%%

\section{An overview of the proofs}

The main element in our proofs is Lemma \ref{lem:sharp}, whose proof we give in this section.
From there, obtaining Theorem \ref{thm:main} and its analogues is a straightforward argument given in the Appendix Section \ref{sec:freeze}.

\subsection{Sharp thresholds and Bourgain's theorem}\label{sec:bourgain}

Before proceeding to the proof of Lemma \ref{lem:sharp}, we briefly give a bit of background material on sharp thresholds.
A subset $\mathcal A_n\subseteq \{0,1\}^N$ is called a \emph{property}, and we say it is \emph{nontrivial} if $\mathcal A_n\subset \{0,1\}^N$.
Property $\mathcal A_n$ is \emph{monotone increasing} (or simply monotone) if for every $x\in \mathcal A$ and $x\subseteq y$ we have $y\in \mathcal A$. 
(Containment of formulas is defined in the natural way, namely $x\subseteq y$ iff every nonzero component of $x$ is also nonzero in $y$.)
We may drop the subscript $n$ when it is unambiguous or unnecessary.
A property is \emph{symmetric} if there is a transitive permutation group under which it is invariant.
For example, in (unplanted) SAT, the property of being unsatisfiable is monotone and symmetric.

In this section and the next it is convenient to make a slight abuse of notation, and write $F(n,p)$ in place of $F(n,\alpha)$ to stress that clauses are included in $F(n,\alpha)$ according to binomial $(p=\alpha n/N)$ distribution.
For a monotone property $\mathcal A_n \subset \{0,1\}^N$, write $\mu_p(\mathcal A_n) = \sum_{x\in A_n}\mu_p(x) =  \prob [F(n,p) \in \mathcal A_n]$.
It's not difficult to show that if $A$ is a nontrivial property then $\mu_p(A)$ is a strictly increasing and continuous function of $p$.
For $\gamma \in (0,1)$, let $p_n(\gamma)$ be the value which (uniquely) satisfies $\mu_{p_n(\gamma)}(A) = \gamma$.
We say that $\hat{p_n}$ is a threshold probability if 
$$\lim_{n\rightarrow \infty} \prob [F(n,p) \in \mathcal A] = \{
\begin{array}{lr}
	1 & \textrm{ if } p_n \gg \hat{p_n}\\
	0 & \textrm{ if } p_n \ll \hat{p_n}
\end{array}
$$
where the notation $p_n\gg \hat{p_n}$ indicates that $\tfrac{\hat{p_n}}{p_n}\rightarrow 0$ as $n$ diverges.

We make a distinction between properties exhibiting a very rapid transition versus those with a more gradual one.
Formally, we say that $\mathcal A$ has a sharp threshold if for every $\gamma \in (0,1)$ there exists $p_{\gamma} = p_{\gamma}(n)$ such that $\prob [F(n,p_{\gamma}) \in \mathcal A] = \gamma$, and such that for every $\delta>0$,
$$\lim_{n\rightarrow \infty} \prob [F(n,p) \in \mathcal A] = \{
\begin{array}{lr}
	1 & : \textrm{ if } p(n) \geq (1+\delta)p_{\gamma}(n)\\
	0 & : \textrm{ if } p(n) \leq (1-\delta)p_{\gamma}(n)
\end{array}
$$

Equivalently, for $\tau \in (0,1)$ define $p_0,p_1,p_c$ such that $\mu(p_0) = \tau$, $\mu(p_1) = 1-\tau$ and $\mu(p_c) = \tfrac 12$.
The property $\mathcal A$ has a \emph{sharp} threshold if the ratio $\tfrac{p_1-p_0}{p_c}$ tends to $0$. 
The threshold is \emph{coarse} if this ratio is bounded away from $0$, i.e. if there exists some constant $C$ such that for some $\gamma \in (0,1)$ we have $p_{\gamma}\frac{d\mu_p(A)}{dp}|_{p=p_{\gamma}} < C$.
Friedgut and Kalai (see \cite{FrKa96}) showed that in this case it must be true that $p_{\gamma} = o(1)$.

% \Enote{Here I commented out a whole paragraph which we can skip for this submission}
%The study of sharp thresholds originates in random graph theory.
%In their foundational work on random graphs (see \cite{ErRe60}) , Erd\H{o}s and R\'{e}nyi proved that several graph properties exhibit a threshold phenomenon, among them connectedness, containing a cycle of fixed length and containing a linear-size (`giant') connected component.
%This was followed by the discovery of thresholds for various other graph properties. 
%A notable example is the threshold (due to Bollob\'{a}s, see \cite{Boll01}) for containment of any fixed graph $H$.
%These results were generalized by Bollob\'{a}s and Thomason \cite{BoTh87} almost 30 years later, who proved that in fact there exists a threshold function for every monotone graph property, namely $\hat{p_n} = p_n(\tfrac12)$. 
%Note that their result holds not just for random graphs but for the more general binomial model of subsets of a ground set (i.e. in our setting).
%A more detailed history is found in \cite{Boll01}.

A crucial contribution to the theory of sharp thresholds is due to Friedgut, in the form of a general existence theorem for sharp thresholds (see \cite{Friedgut05}, \cite{FrBo99}). Roughly, the theorem asserts that if a monotone symmetric property has a coarse threshold, then it can be approximated by the property of containing a small fixed subgraph.
We omit the statement of Friedgut's theorem since it does not apply in our setting; introducing a planted solution does away with the symmetry in the properties we are interested in.
Fortunately in the appendix to \cite{FrBo99}, Bourgain gave an analogue of Friedgut's result to nonsymmetric properties as follows.
This is the theorem we will need to apply.

\begin{theorem}[Bourgain \cite{FrBo99}]\label{thm:bourgain}(See also \cite{KrNa06})
Let $\mathcal A_n \subset \{0,1\}^N$ be a monotone property, and $C>0$ constant. 
Suppose $\mu_p$ is the product measure on $\{0,1\}^N$, i.e. $\mu_p(x) = p^{|x|}(1-p)^{N-|x|}$ for every $x$.
Assume that there exists $\gamma\in (0,1)$ such that 
${\mu_{p_{\gamma}}(\mathcal A_n) = \gamma}$ and 
${p_{\gamma}\frac{d\mu_p(\mathcal A_n)}{dp}|_{p=p_{\gamma}} < C}$ and $p = o(1)$. 
Then there exists $\delta = \delta(C) > 0$ such that either
\begin{enumerate}
	\item $\mu_p(x\in \{0,1\}^n : x \textrm{ contains } x'\in \mathcal A_n \textrm{ of size } |x'| \leq 10C) > \delta$, or
	\item there exists $x'\notin \mathcal A_n$ of size $|x'|\leq 10C$ such that the conditional probability satisfies $$\mu_p(x\in \mathcal A_n | x'\subset x) > \gamma+\delta. $$
\end{enumerate}
\end{theorem}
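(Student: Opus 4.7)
The plan is to prove Bourgain's theorem via discrete Fourier analysis on the biased hypercube $(\{0,1\}^N, \mu_p)$, following the KKL paradigm adapted to the asymmetric setting. The starting point is the Margulis--Russo identity
$$\frac{d\mu_p(\mathcal A_n)}{dp} \;=\; \frac{1}{p(1-p)} \sum_{i=1}^N \mathrm{Inf}_i^{(p)}(\mathbf{1}_{\mathcal A_n}),$$
where $\mathrm{Inf}_i^{(p)}$ denotes the influence of coordinate $i$ under $\mu_p$. The hypothesis $p \cdot \frac{d\mu_p(\mathcal A_n)}{dp} < C$ together with $p = o(1)$ therefore translates directly into a quantitative bound on the total influence of $\mathbf{1}_{\mathcal A_n}$, which will drive the rest of the argument.

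Next I would expand $\mathbf{1}_{\mathcal A_n}$ in the Fourier basis for $\mu_p$ and invoke the Bonami--Beckner hypercontractive inequality in the form tailored to small $p$, so as to control $\|T_\rho f\|_q$ on each low Fourier level. Combined with monotonicity of $\mathcal A_n$, this forces most of the Fourier mass of $\mathbf{1}_{\mathcal A_n}$ to be concentrated on characters indexed by \emph{small} subsets $S \subseteq [N]$ of cardinality $O(C)$. This is the asymmetric analogue of the Friedgut--Kalai junta phenomenon: without symmetry one does not get a literal junta on $O(1)$ coordinates, but one still gets an approximate local structure expressible through the low Fourier levels.

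From this structural fact I would derive the claimed dichotomy by decomposing the low-degree Fourier mass according to whether the indexing subsets $S$ already witness $\mathcal A_n$. If a non-negligible fraction of the mass is carried by subsets $S$ with $S \in \mathcal A_n$ and $|S| \leq 10C$, then a union bound shows that a random $F(n,p)$ contains such a witness with probability at least some $\delta = \delta(C) > 0$, which is conclusion (1). Otherwise, a pigeonhole/averaging argument over the remaining characters produces a specific small configuration $x' \notin \mathcal A_n$, $|x'| \leq 10C$, such that the conditional measure $\mu_p(\,\cdot \mid x' \subset x)$ places at least $\gamma + \delta$ of its mass on $\mathcal A_n$, giving conclusion (2). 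The two branches of the dichotomy reflect the two ways that the bounded influence budget can be spent on ``local'' witnesses.

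The main obstacle, and precisely the new input of Bourgain over Friedgut's symmetric theorem, is the absence of a transitive permutation group: we cannot conclude that all coordinates carry equal influence and thus cannot collapse $\mathcal A_n$ to a bounded-coordinate junta. The delicate point will be to push the Fourier argument through uniformly as $p \to 0$, ensuring that the hypercontractive estimates remain sharp in the $\mu_p$ setting and that the cutoff $10C$ on the size of the extracted configuration $x'$, together with the gap $\delta(C)$, can be obtained with explicit dependence only on $C$. Handling this $p \to 0$ regime, rather than the balanced case $p = \tfrac12$ covered by classical KKL, is where the proof becomes genuinely technical.
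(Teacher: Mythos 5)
You should note at the outset that the paper does not prove this theorem at all: it is imported verbatim from the appendix of Friedgut's paper \cite{FrBo99} and used as a black box in the proof of Lemma~\ref{lem:sharp}. So there is no ``paper's proof'' to compare against; the relevant question is whether your sketch is a faithful account of Bourgain's argument.

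Your high-level outline does identify the right machinery. Bourgain's proof is indeed Fourier-analytic: one converts the derivative bound into a total-influence bound via Margulis--Russo, then uses a $p$-biased version of hypercontractivity (the Bonami--Beckner operator with the $p$-dependent constant) to show the Fourier mass of $\mathbf{1}_{\mathcal{A}_n}$ lives mostly on levels $O(C)$, and the dichotomy is extracted from this low-degree structure together with monotonicity. You also correctly identify the genuine novelty over the symmetric (Friedgut) case: without a transitive symmetry group one cannot average influences across coordinates, so the junta reduction fails and one must settle for a local booster set. And you are right that the uniformity of the hypercontractive estimate as $p\to 0$ is where the real technicality lies.

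However there are two concrete gaps. First, the passage from ``Fourier mass concentrated on low levels'' to the stated dichotomy is glossed over in a way that hides the hard part. The dichotomy speaks of small sub-formulas $x'\in\{0,1\}^N$ and the conditional measure $\mu_p(\,\cdot\mid x'\subset x)$; a Fourier coefficient $\hat f(S)$ being large at a small set $S$ does \emph{not} directly assert either that $S$ (viewed as an element of $\{0,1\}^N$) lies in $\mathcal{A}_n$ or that conditioning on $S\subset x$ boosts the measure of $\mathcal{A}_n$ by $\delta$. The conditional probability $\mu_p(\mathcal A_n\mid x'\subset x)$ is a specific signed sum over the Fourier coefficients $\hat f(S)$ with $S\subseteq\mathrm{supp}(x')$, and extracting a single booster from the aggregate low-level mass requires a genuinely combinatorial averaging argument, which is the heart of Bourgain's appendix and is missing from your sketch. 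Second, the claim that ``a union bound shows that a random $F(n,p)$ contains such a witness with probability at least some $\delta$'' is backwards: a union bound yields an \emph{upper} bound on the probability of containing a witness, not a lower bound. To establish conclusion (1) one needs a lower-bound device (e.g.\ the second-moment method, Bonferroni, or a conditioning argument), not a union bound. As written this step would fail. The overall route is right, but these two links in the chain are precisely the ones that make the theorem non-trivial, and a complete proof would have to supply them.
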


Friedgut's theorem and Theorem \ref{thm:bourgain} provide a framework for finding sharp thresholds that has been widely exploited. 
These theorems typically allow one to prove the existence of a sharp threshold whose value depends on $n$, whereas in many cases the threshold is believed to converge. 
Friedgut's original application was to show that satisfiability for $k$-SAT has a sharp threshold.
% The theorem also does not give an explicit value for the threshold.
%% discussed in intro
% The \emph{satisfiability threshold conjecture} posits that for fixed $k$ the threshold approaches a fixed (depending only on $k$) value as $n$ diverges. 
% This attractive and challenging problem is still open, but was recently proved for; see \cite{Ach09} for a nice overview of recent progress.
He also used the theorem to prove that in hypergraphs, the property of having a perfect matching, as well as $2$-colourability have sharp thresholds.
With Achlioptas in \cite{AcFr99} they proved that $k$-colourability of graphs (for fixed $k$) has a sharp threshold.
Krivelevich and Nachmias in \cite{KrNa06} showed the same for list-colourability of bipartite graphs.
Their proof uses a neat combinatorial trick (due to Alon) of combining Theorem \ref{thm:bourgain} with a theorem of Erd\H{o}s and Simonovits. 
We use a similar approach in the next section.
A comprehensive survey of applications of Friedgut's theorem can be found in \cite{Friedgut05}.

\subsection{An $n$-dependent sharp threshold for planted CSPs}\label{sec:planted}

Here, we prove Lemma \ref{lem:sharp}.
For a fixed $\phi >0$, we are interested in the property $\mathcal A_{\phi} (={\mathcal{A}_{\phi}}_n) = \{F \in \{0,1\}^N; Z(F)< 2^{\phi n}\}$.
Clearly, $\mathcal A_{\phi}$ is monotone increasing.
We will show that it has a sharp ($n$-dependent) threshold.
As before, let $\mathcal A_{\phi} = \{F \in \{0,1\}^N; Z(F)< 2^{\phi n}\}$,
and now let $F=F(n,p)$ denote a CSP obtained as in (\ref{proc:planted}).
(We explain how the proof can be adjusted to handle $F(n,p)$ as in (\ref{proc:gold}) in the Appendix Section \ref{sec:sharpgold}.)
 % or (\ref{proc:gold})
Lemma \ref{lem:sharp} can be restated as follows.
\begin{lemma}\label{lem:sharpproperty}
For a fixed $k$ and $\phi>0$, the property $\mathcal A_{\phi}$ has a sharp threshold.
\end{lemma}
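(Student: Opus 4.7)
I would argue by contradiction using Bourgain's theorem (Theorem \ref{thm:bourgain}), following the Alon--Erd\H{o}s--Simonovits trick from Krivelevich--Nachmias \cite{KrNa06}. Bourgain requires a product measure, which $\mu_p$ on $\{0,1\}^N$ is not under planting. Conditional on $v^0$, however, the clauses satisfied by $v^0$ are included independently with probability $p$, giving a product measure on $\{0,1\}^{N_{v^0}}$. By XOR-ing by $v^0$ the conditional law on sat-by-$v^0$ clauses is isomorphic to the conditional law with $v^0 = \mathbf 0$, so I fix $v^0 = \mathbf 0$ WLOG. In this conditioning both $\mu_p$ and $\mathcal A_\phi$ are invariant under the natural $S_n$ action on variables ($\mathcal A_\phi$ because it depends only on $Z$), and $p = \alpha n / N = O(n^{-(k-1)}) = o(1)$, so Bourgain's hypotheses are met.

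Assume toward contradiction that $\mathcal A_\phi$ has a coarse threshold. Bourgain then yields either (i) a small $x' \in \mathcal A_\phi$ with $|x'|\leq 10C$ contained in $F$ with probability $\geq \delta$, or (ii) a small $x' \notin \mathcal A_\phi$ with $\mu_p(\mathcal A_\phi \mid x' \subset F) > \gamma+\delta$. Case (i) is immediate: any such $x'$ is satisfied by $\mathbf 0$ and involves at most $10Ck$ variables, so extending $\mathbf 0|_{V_{x'}}$ arbitrarily off $V_{x'}$ gives $Z(x') \geq 2^{n-10Ck}$, which exceeds $2^{\phi n}$ for $n$ large and $\phi<1$, contradicting $x' \in \mathcal A_\phi$.

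For case (ii), define the count of permuted copies $N_{x'}(F) := |\{\sigma\in S_n : \sigma(x')\subset F\}|$. The $S_n$-symmetry yields $\mu_p(\mathcal A_\phi \mid \sigma(x')\subset F) = \mu_p(\mathcal A_\phi \mid x'\subset F)$ for every $\sigma$, so summing over $\sigma$,
\[
  \mathbb E\!\left[\mathbf 1_{\mathcal A_\phi}(F)\, N_{x'}(F)\right] \;=\; \mathbb E[N_{x'}]\,\mu_p(\mathcal A_\phi \mid x'\subset F) \;>\; (\gamma+\delta)\,\mathbb E[N_{x'}].
\]
An Erd\H{o}s--Simonovits-style second-moment bound should give $\mathrm{Var}(N_{x'}) = o(\mathbb E[N_{x'}]^2)$; Cauchy--Schwarz applied to $\mathrm{Cov}(\mathbf 1_{\mathcal A_\phi}, N_{x'})$ then yields $\mathbb E[\mathbf 1_{\mathcal A_\phi}\, N_{x'}] = \gamma\,\mathbb E[N_{x'}] + o(\mathbb E[N_{x'}])$, contradicting the display for $n$ large. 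The conclusion is independent of the choice $v^0 = \mathbf 0$, so it lifts to the unconditional $\mu_p(\mathcal A_\phi)$.

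The crux is verifying $\mathrm{Var}(N_{x'}) = o(\mathbb E[N_{x'}]^2)$ uniformly over the $x'$'s that Bourgain might output. Tree-like $x'$ are routine, but $x'$ can be dense (e.g.\ many distinct clauses on few variables), in which case overlap pairs $(\sigma(x'), \sigma'(x'))$ concentrate on small vertex sets and threaten to inflate the second moment. I would control these by a case-analysis on the shape of the overlap sub-hypergraph, using $p = O(n^{-(k-1)})$ to damp the non-trivial overlap terms, mirroring the second-moment step of \cite{KrNa06}.
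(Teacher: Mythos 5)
Your setup (condition on $v^0 = 0^n$ to recover a product measure, invoke Bourgain, dispose of Case~1 by observing any small sub-formula is satisfied by $2^{n-10Ck}$ assignments) matches the paper exactly. Where you diverge is Case~2: you try to contradict $\mu_p(\mathcal A_\phi \mid x' \subset F) > \gamma+\delta$ directly by a second-moment argument on the copy count $N_{x'}$, whereas the paper (following \cite{KrNa06} and \cite{AchlioptasGeom}) uses a two-round exposure argument together with the Erd\H{o}s--Simonovits theorem on the set of \emph{bad tuples}. That distinction is not cosmetic: the second-moment route is dead on arrival for dense $x'$. Bourgain's theorem only guarantees $|x'| \leq 10C$ and places no lower bound on the number of variables $x'$ touches, so $x'$ can have $s$ clauses on barely more than $k$ variables. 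For such $x'$, $\E[N_{x'}] \sim n^{t} p^{s} = n^{t - (k-1)s} \to 0$, and for any nonnegative integer-valued random variable with vanishing mean one has $\mathrm{Var}(N_{x'}) \geq \E[N_{x'}] - \E[N_{x'}]^2 \gg \E[N_{x'}]^2$. So the bound $\mathrm{Var}(N_{x'}) = o(\E[N_{x'}]^2)$ that your Cauchy--Schwarz step hinges on is \emph{false}, not merely hard to verify, and no amount of bookkeeping on overlap shapes rescues it. (You also misattribute the step to \cite{KrNa06}: their argument is the Erd\H{o}s--Simonovits one, not a variance computation.)

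The way the paper sidesteps this is worth internalizing because it is the whole point of the Alon trick. After the two-round exposure yields a fixed $H_0 \notin \mathcal A_\phi$ with $\prob[H_0 \cup x^* \in \mathcal A_\phi] > \gamma+\delta$ (for a uniformly relabelled copy $x^*$) and $\prob[H_0 \cup F^0(n,\ve'p_\gamma) \in \mathcal A_\phi] < \gamma+\delta/2$, one never tries to plant a literal copy of $x'$ inside $F^0(n,\ve'p_\gamma)$. Instead one observes that a $\delta/2$-fraction of all $t$-tuples of variables are ``bad'' (i.e.\ $Z(H_0 \cup x(v)) < 2^{\phi n}$), applies Erd\H{o}s--Simonovits to find disjoint $k$-sets $X_1,\dots,X_t$ every transversal of which is bad, and then exploits the \emph{planted} clause structure: with probability $2^{-kt}$ the $t$ random clauses on $X_1,\dots,X_t$ each force some variable to its planted value, so any satisfying assignment of $H_0 \cup C_1 \cup \dots \cup C_t$ extends a bad $t$-tuple and there are $< k^t 2^{\phi n}$ of them. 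Crucially these $t$ forcing clauses are single clauses, which appear in abundance ($F^0(n,\ve'p_\gamma)$ has $\Theta(n)$ clauses), so there is no rare-event issue at all. You would need to replace your second-moment step with this argument to close the proof.
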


% but we remark that our class of $\chi$'s is disjoint from the class shown in \cite{BoQi09} to give rise to invertible functions in Goldreich's model (for large enough $\alpha$).

To prove Lemma \ref{lem:sharpproperty} we will apply Bourgain's Theorem (Theorem \ref{thm:bourgain}). 
In the distribution of $F(n,p)$, we do not have the assumption on $\mu_p$ in the hypothesis of Theorem \ref{thm:bourgain}.
To overcome this difficulty we need to consider fixed plantings, and observe that conditioning on the random choice of $v^0$ doesn't change the probability of the property $\mathcal A_{\phi}$.
By total probability,
$$\prob [F(n,p) \in \mathcal A_{\phi}] = \sum_{v\in \{0,1\}^n}\prob [F(n,p) \in \mathcal A_{\phi} | v^0 = v]\prob [v^0 = v].$$                                                                                                                                                                     
Further, for any $v \in \{0,1\}^n$, the conditional probability satisfies
$$\prob [F(n,p) \in \mathcal A_{\phi} | v^0 = v] = \prob[F(n,p) \in \mathcal A_{\phi} | v^0 = 0^n] $$
since the number of satisfying assignments is unchanged by swapping a variable with its negation.

Therefore, if we let $F^0(n,p)$ denote a formula obtained by independently including each $k$-clause which is satisfied by $v^0 = 0^n$ with probability $p$, we have $\prob [F(n,p) \in \mathcal A_{\phi}] = \prob [F^0(n,p) \in \mathcal A_{\phi}]$.
So to prove Lemma \ref{lem:sharpproperty} it is enough to show that $\mathcal A_{\phi}$ has a sharp threshold when $0^n$ is the planted solution.
Now, the space we are working in is $\{0,1\}^{N'}$, where $N' = \binom nk (2^k-1)$, and indeed $\mu_p(F) = p^{|F|}(1-p)^{N'-|F|}$.
For the remainder of the proof, this will be the assumed setting.

We now proceed to prove the sharp threshold.
The idea is to assume for a contradiction that $\mathcal{A}_{\phi}$ has a coarse threshold, and apply Bourgain's theorem.
We closely follow arguments found in \cite{KrNa06} and \cite{AchlioptasGeom}.
Roughly, Bourgain's theorem implies the existence of some fixed small formula $x'$ whose appearance in a random formula increases the probability of having property $\mathcal A_{\phi}$ by a positive amount.
Note that $\mathcal{A}_{\phi}$, while not symmetric, is invariant under relabelings of the variable set (i.e. automorphisms of $\{v_1,\dots,v_n,\neg v_1,\dots,\neg v_n\}$ which map $\{v_1,\dots,v_n\}$ to itself and $\neg v_i$ to the negation of the image of $v_i$, for each $i$).
This property is sometimes called \emph{permutation symmetry}.
Thus, containing a random (relabeled) copy of $x'$ has the same effect on the probability of having $\mathcal A_{\phi}$.
On the other hand, the assumption that the threshold is coarse implies that adding a large number of random clauses does not drastically change the probability of belonging to $\mathcal A_{\phi}$.
We will see that with the addition of a sufficient number of random clauses we can simulate the addition of $x'$.

\begin{proof}[Proof of Theorem \ref{lem:sharpproperty}]
Suppose for a contradiction that $\mathcal A_{\phi}$ has a coarse threshold.
Then there exist $\gamma$, $p_{\gamma} = o(1)$ and $C$ as in Theorem \ref{thm:bourgain}, and so one of the two cases in its conclusion must hold.

\noindent {\bf Case 1:} $\mu_p(x\in \{0,1\}^n : x \textrm{ contains } x'\in \mathcal A_{\phi} \textrm{ of size } |x'| \leq 10C) > \delta$.\\
If the size of a formula $x'$ is $\leq 10C$, then its clauses involve at most $10Ck$ variables.
Since $x'\in \mathcal A_{\phi}$, and it is satisfied by $v^0$, assigning the planted value to the variables appearing in $x'$ and arbitrary values to the other variables yields a satisfying assignment.
It follows that $Z(x') \geq 2^{n-10Ck} > 2^{\phi n}$ for large enough $n$, so $x' \notin \mathcal{A}_{\phi}$.
This proves that Case 1 cannot occur.
\\

\noindent {\bf Case 2:} there exists $x'\notin \mathcal{A_{\phi}}$ of size $|x'|\leq 10C$ such that the conditional probability satisfies $\mu_{p_{\gamma}}(x\in \mathcal A_{\phi} | x'\subset x) > \gamma+\delta$.\\

Clearly $x'$ is satisfied by $v^0$.
Denote by $t\leq 10Ck$ the number of variables appearing in $x'$.
Without loss of generality, assume these variables are $v_1,\dots,v_t$.
For a $t$-tuple $v = (v_{i_1},\dots, v_{i_t})$ of distinct variables, we write $x'(v)$ to denote the result of relabeling each variable $v_j$ in $x'$ to $v_{i_j}$.
Since $\mathcal A_{\phi}$ has permutation symmetry, it follows that for any $t$-tuple $v$, the conditional probability satisfies $\mu_p(x\in \mathcal A | x(v)\subset x) > \gamma+\delta $.
We write $x^*$ to mean the result of taking $x(v)$ after drawing a uniformly random $t$-tuple $v$.
In other words, if a random formula $F^0(n,p_{\gamma})$ is drawn, the union $F^0(n,p_{\gamma}) \cup x^*$ belongs to $\mathcal{A}_{\phi}$ with probability at least $\gamma + \delta$.

Now, since $p_{\gamma}\frac{d\mu_p(\mathcal A_{\phi})}{dp}|_{p=p_{\gamma}} < C$ it follows that $\lim_{\ve \rightarrow \infty} \frac{\mu_{p_{\gamma}+\ve p_{\gamma}}(\mathcal A_{\phi}) - \mu_{p_{\gamma}}(\mathcal A_{\phi})}{\ve p_{\gamma}} < C$. 
Thus, for some $\ve$ we have $\mu_{p_{\gamma}+\ve p_{\gamma}}(\mathcal A_{\phi}) < \gamma + \tfrac{\delta}{2}$.
Further, (by a standard two-round exposure argument) choosing a formula $F^0(n,p_{\gamma} + \ve p_\gamma)$ is equivalent to choosing formulae $F^0(n,p_{\gamma})$ and $F^0(n,\ve'p_{\gamma})$ for some $\ve'$ and taking their union. 
Note that $\ve,\ve'$ don't depend on $n$, since $C$ does not.

Denote by $x^*$ a random copy of $x'$ drawn as above. Then the above tells us that
$$ \prob [ F^0(n,p_{\gamma}) \cup x^* \in \mathcal A_{\phi} ] > \gamma + \delta $$ while
$$ \prob [ F^0(n,p_{\gamma})  \cup F^0(n,\ve'p_{\gamma}) \in \mathcal A_{\phi} ] < \gamma + \tfrac{\delta}{2}. $$
It follows that for some formula $H_0 \in \{0,1\}^N$ we have 
\begin{equation}\label{eq:H}
\prob [ H_0 \cup x^* \in \mathcal A_{\phi} ] - \prob [ H_0 \cup F^0(n,\ve'p_{\gamma}) \in \mathcal A_{\phi} ] > \tfrac{\delta}{2}
\end{equation}

Clearly, $H_0 \notin \mathcal A_{\phi}$.
Let's say that a $t$-tuple of distinct variables $v = (v_{i_1},\dots,v_{i_t}) \in \{v_1,\dots, v_n\}^t$ is \emph{bad} if $Z(H_0 \cup x(v)) < 2^{\phi n}$.
It follows that at least a $\tfrac{\delta}{2}$ fraction of all $\binom nt t!$ $t$-tuples are bad. Let $T$ be the set of bad tuples.
We need the following theorem of Erd\H os and Simonovits \cite{ErSi83}.

\begin{theorem}[Erd\H os and Simonovits]\label{thm:es}
Let $k,t$ be positive integers and $0\leq \gamma \leq 1$. 
There exists $\gamma' > 0$ such that for sufficiently large $n$, if $T\subset [n]^t$ is such that $|T|>\gamma n^t$ then with probability at least $\gamma'$ a random choice of $t$ disjoint $k$-tuples $X_1,\dots X_t$ from $[n]$ satisfies that every $t$-tuple $(x_1,\dots,x_t)$ with $x_i\in X_i$ is bad. 
We say that $X_1,\dots,X_t$ is \emph{$T$-complete}.
\end{theorem}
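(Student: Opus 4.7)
The plan is to argue by induction on $t$, following a standard dependent random choice / supersaturation argument for $t$-partite $t$-uniform hypergraphs. The base case $t=1$ is direct: if $T\subseteq[n]$ satisfies $|T|>\gamma n$, then a uniformly random $k$-subset $X_1$ is contained in $T$ with probability $\binom{|T|}{k}/\binom{n}{k}\geq \gamma^{k}/2$ for $n$ large, which serves as a valid $\gamma'$.

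For the inductive step, I would fix $T\subseteq[n]^{t}$ with $|T|>\gamma n^{t}$ and set $d(x_{2},\ldots,x_{t}):=|\{x_{1}:(x_{1},\ldots,x_{t})\in T\}|$, so the average of $d$ over $[n]^{t-1}$ exceeds $\gamma n$. Draw $X_{1}\subseteq[n]$ uniformly among $k$-subsets and define
$$T':=\bigl\{(x_{2},\ldots,x_{t})\in([n]\setminus X_{1})^{t-1}\ :\ X_{1}\subseteq\{x_{1}:(x_{1},x_{2},\ldots,x_{t})\in T\}\bigr\}.$$
Linearity of expectation combined with Jensen's inequality applied to the convex map $s\mapsto\binom{s}{k}$ then yields
$$\E\,|T'|\ \geq\ \sum_{(x_{2},\ldots,x_{t})\in[n]^{t-1}}\frac{\binom{d(x_{2},\ldots,x_{t})}{k}}{\binom{n}{k}}\ -\ O(n^{t-2})\ \geq\ \frac{\binom{\gamma n}{k}}{\binom{n}{k}}\,n^{t-1}\ -\ O(n^{t-2})\ \geq\ \tfrac{\gamma^{k}}{2}n^{t-1},$$
where the $O(n^{t-2})$ term absorbs those $(x_{2},\ldots,x_{t})$ that collide with $X_{1}$. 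A reverse Markov inequality applied to $|T'|\leq (n-k)^{t-1}$ then guarantees that with probability at least $\gamma^{k}/4$ over the choice of $X_{1}$, one has $|T'|\geq (\gamma^{k}/8)(n-k)^{t-1}$. Conditional on such a good $X_{1}$, I would apply the inductive hypothesis on ground set $[n]\setminus X_{1}$ with density parameter $\gamma^{k}/8$ to produce a constant $\gamma''>0$ such that a random choice of disjoint $k$-subsets $X_{2},\ldots,X_{t}\subseteq[n]\setminus X_{1}$ is $T'$-complete with probability at least $\gamma''$. By construction of $T'$, the whole tuple $X_{1},\ldots,X_{t}$ is then $T$-complete, so $\gamma':=(\gamma^{k}/4)\gamma''$ closes the induction.

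The main technical obstacle I expect is bookkeeping: the density collapses from $\gamma$ to roughly $\gamma^{k}/8$ in a single inductive step, so the final $\gamma'$ ends up being an iterated power tower in the original $\gamma$, but is still a positive constant independent of $n$, which is all the statement requires. One also needs to check carefully that the two-stage sampling scheme (pick $X_{1}$, then pick $X_{2},\ldots,X_{t}$ uniformly among $k$-subsets of $[n]\setminus X_{1}$) matches, up to a universal constant factor, the natural uniform distribution on ordered $t$-tuples of pairwise disjoint $k$-subsets of $[n]$, and that the $O(n^{t-2})$ corrections remain uniformly negligible throughout the induction; both are routine but deserve verification.
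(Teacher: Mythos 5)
The paper does not prove Theorem~\ref{thm:es}; it is imported verbatim from Erd\H{o}s and Simonovits~\cite{ErSi83} as a black box (it is a supersaturation statement for the complete $t$-partite $t$-uniform hypergraph with all parts of size $k$), so there is no in-paper argument to compare against. Your proposal is a correct, self-contained proof, and it is in essence the standard dependent-random-choice proof of hypergraph supersaturation: compute the expected number of $(t-1)$-tuples whose first-coordinate link contains all of $X_1$ by convexity of $s\mapsto\binom{s}{k}$, pass by reverse Markov to a constant-probability event where the link $T'$ is dense, and recurse in $t$. Two small remarks. First, your worry about matching distributions is overcautious: picking $X_1$ uniformly among $k$-subsets of $[n]$ and then $X_2,\dots,X_t$ uniformly among disjoint $k$-subsets of $[n]\setminus X_1$ is \emph{exactly} the uniform distribution on ordered $t$-tuples of pairwise disjoint $k$-subsets (the internal ordering within each $X_i$, if one insists on ordered $k$-tuples, is independent and irrelevant to $T$-completeness), so no constant-factor coupling argument is needed. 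Second, the recursion $\gamma\mapsto\gamma^{k}/8$ iterated $t-1$ times produces $\gamma'$ on the order of a constant times $\gamma^{k^{t-1}}$, a polynomial in $\gamma$ rather than an iterated power tower; this does not affect correctness, since all that is needed is $\gamma'>0$ independent of $n$, which your argument delivers.
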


We will obtain a contradiction from Theorem \ref{thm:es}.
Basically, we will ensure that with high probability, adding $F^0(n,\ve'p)$ to $H_0$ implies adding clauses $C_1,\dots, C_t$, where each clause $C_i$ forces some variable to be set to its planted value, and the set of $k$-tuples of variables in the clauses is $T$-complete.

Consider drawing $t$ random clauses.
Applying Theorem \ref{thm:es} with $\gamma = \tfrac{\delta}{2}$ we find some $\gamma'$ for which the $t$ $k$-clauses are $T$-complete with probability at least $\gamma'$.
Given that they are $T$-complete, the probability that they are each of the form $\chi_e(v_{i_1}\dots v_{i_k}) \neq 1^k$ (in $k$-SAT or $k$-NAESAT case, or of the form $\chi_e(v_{i_1}\dots v_{i_k}) \neq k \mod 2$ in the $k$-XORSAT case) is $2^{-kt}$.
Observe that each clause forces some variable to take the value $0$, except in $k$-XORSAT when $k$ is even and the clause forces some variable to take the value $1$.

We claim that adding $t$ such clauses to $H_0$ yields a formula with $<k^t 2^{\phi n}$ satisfying assignments. 
Indeed, suppose we have a satisfying assignment. 
Then at least one variable, say $c_i$, from each of the $C_i$ must be set to $0$ ($1$ in the even $k$-XORSAT case). 
This is at least as restrictive as containing $x((c_1,\dots,c_t))$, since $x(0^t)$ is satisfied (and in the even $k$-XORSAT case, therefore $x(1^t)$ is also satisfied ). 
But $(c_1,\dots,c_t)$ is a bad tuple so there are fewer than $2^{\phi n}$ ways to extend these to the remaining variables to get a satisfying assignment for $H_0$.

With high probability, $F(\ve' p_{\gamma})$ has $\Theta(\ve' p_{\gamma} \binom nk (2^k-1)) \rightarrow \infty$ clauses.
So if we draw $F^0(n,\ve' p_{\gamma})$ the probability that the clauses added don't include $t$ clauses which force a $0$ variable as above is at most about ${(1-\gamma'2^{-kt})^{ \ve' p_{\gamma} \binom nk (2^k-1) /t}}$, which we can make as small as we like as $n\rightarrow \infty$. In particular, we can assume it is smaller than $\tfrac {\delta}{2}$.
In the event that $F^0(n,\ve' p_{\gamma})$ does include these $t$ clauses $C_1,\dots,C_t$, consider a satisfying assignment of $H_0 \cup C_1 \dots C_t$.
The probability that it satisfies a randomly chosen $k$-clause is $(1-2^{-k})$.
Therefore, in this case the expected value of $Z(H_0 \cup F^0(n,\ve' p_{\gamma}))$ is at most ${k^t 2^{\phi n} (1-2^{-k})^{|F^0(n,\ve' p_{\gamma})|-t} < 2^{\phi n}}$ with high probability. Applying Markov's inequality, we can ensure that with probability greater than $1-\tfrac{\delta}{2}$, the formula $H_0 \cup F^0(n,\ve' p_{\gamma})\in \mathcal A_{\phi}$, contradicting (\ref{eq:H}).
This proves Case 2 cannot occur and completes the proof of the lemma.

\end{proof}

%%%%%%%%%%%%%%%%%%%%%%%%%%%%%%%%%%%%%%%%%%%%%%%%%%%%%%%%%%%%%%%%%%%%%%%%%%%%%%%%%%%%%%%%%%
%%%%%%%%%%%%%%%%%%%%%%%%%%%%%%%%%%%%%%%%%%%%%%%%%%%%%%%%%%%%%%%%%%%%%%%%%%%%%%%%%%%%%%%%%%
%%%%%%%%%%%%%%%%%%%%%%%%%%%%%%%%%%%%%% OPEN QUESTIONS %%%%%%%%%%%%%%%%%%%%%%%%%%%%%%%%%%%%
%%%%%%%%%%%%%%%%%%%%%%%%%%%%%%%%%%%%%%%%%%%%%%%%%%%%%%%%%%%%%%%%%%%%%%%%%%%%%%%%%%%%%%%%%%
%%%%%%%%%%%%%%%%%%%%%%%%%%%%%%%%%%%%%%%%%%%%%%%%%%%%%%%%%%%%%%%%%%%%%%%%%%%%%%%%%%%%%%%%%%
\section{Open problems}
As mentioned in the introduction, it is not obvious that the conditions on the predicate $\chi$ used to obtain concentration (see Definition \ref{def:h-gold}) are compatible with the conditions ruling out easily invertible functions \cite{BoQi09}. The bottleneck here seems to be Hypothesis H, which at a high-level, translates the sub-additivity of the logarithm of the number of solutions (used to obtain concentration) into a local convexity property of the predicate $\chi$. If the convexity property were in fact necessary, then this would be in conflict with the choice of predicates that seem to avoid the undesirable balanceness properties, making the problem curiously tensed between concentration and hardness. It would hence be interesting to show that this a limitation of our current proof technique, unless concentration has anything to do with hardness. 

Another interesting question would be to obtain convergence rates for the convergence in probability. We obtain an exponential  rate in Theorem \ref{thm:goldreichcurve} using martingale arguments, but this does not apply to our results relying on Bourgain. 

Finally, the results in this paper are about the most basic properties of the solution space, namely, it cardinality. It would be interesting to understand rigorously finer properties of the solution space for planted models to deduce proper choices of the rate and predicates for the Goldreich one-way function.  

%%%%%%%%%%%%%%%%%%%%%%%%%%%%%%%%%%%%%%%%%%%%%%%%%%%%%%%%%%%%%%%%%%%%%%%%%%%%%%%%%%%%%%%%%%
%%%%%%%%%%%%%%%%%%%%%%%%%%%%%%%%%%%%%%%%%%%%%%%%%%%%%%%%%%%%%%%%%%%%%%%%%%%%%%%%%%%%%%%%%%
%%%%%%%%%%%%%%%%%%%%%%%%%%%%%%%%%%%%%% REFERENCES %%%%%%%%%%%%%%%%%%%%%%%%%%%%%%%%%%%%%%%%
%%%%%%%%%%%%%%%%%%%%%%%%%%%%%%%%%%%%%%%%%%%%%%%%%%%%%%%%%%%%%%%%%%%%%%%%%%%%%%%%%%%%%%%%%%
%%%%%%%%%%%%%%%%%%%%%%%%%%%%%%%%%%%%%%%%%%%%%%%%%%%%%%%%%%%%%%%%%%%%%%%%%%%%%%%%%%%%%%%%%%

\section*{Acknowledgement}
We would like to thank R.\ Impagliazzo for suggesting the Goldreich one-way function model to the first author, as well as A.\ Montanari for stimulating discussions. 

\bibliographystyle{abbrv}
\bibliography{threshold,gen2}

%%%%%%%%%%%%%%%%%%%%%%%%%%%%%%%%%%%%%%%%%%%%%%%%%%%%%%%%%%%%%%%%%%%%%%%%%%%%%%%%%%%%%%%%%%
%%%%%%%%%%%%%%%%%%%%%%%%%%%%%%%%%%%%%%%%%%%%%%%%%%%%%%%%%%%%%%%%%%%%%%%%%%%%%%%%%%%%%%%%%%
%%%%%%%%%%%%%%%%%%%%%%%%%%%%%%%%%%%%%% APPENDIX %%%%%%%%%%%%%%%%%%%%%%%%%%%%%%%%%%%%%%%%%%
%%%%%%%%%%%%%%%%%%%%%%%%%%%%%%%%%%%%%%%%%%%%%%%%%%%%%%%%%%%%%%%%%%%%%%%%%%%%%%%%%%%%%%%%%%
%%%%%%%%%%%%%%%%%%%%%%%%%%%%%%%%%%%%%%%%%%%%%%%%%%%%%%%%%%%%%%%%%%%%%%%%%%%%%%%%%%%%%%%%%%

\appendix

\section{Freezing the threshold}\label{sec:freeze}
In this section we prove Theorem \ref{thm:freeze}.
The proof essentially follows arguments in \cite{AbMo13RSA}, but we give it here for completeness.

% \begin{theorem}\label{thm:sharp}
% For every $k\geq 2$, there exist a countable set $\mathcal C$ and a function $\phi_s:\mathbb R_{\geq 0} \to [0,1]$ such that for each $\phi\in \phi_s(\mathbb R_{\geq 0})$ there exists $\alpha_s(\phi)$ such that for each $\epsilon>0$,
% $$\lim_{n\to \infty}  Q_n(\alpha_s(\phi) - \epsilon, \phi) = 0$$
% and
% $$\lim_{n\to \infty}  Q_n(\alpha_s(\phi) + \epsilon, \phi) = 1$$
% \end{theorem}

\begin{proof}[Proof of Theorem \ref{thm:freeze}]
For $\alpha \in [0,\alpha^*]$, let $\phi_s(\alpha)$ denote the limit of the sequence $\psi_n(\alpha) = \frac 1n \mathbb E [\log Z(F(n,\alpha))]$ which converges almost surely by Theorem \ref{thm:conv}.

Let $\phi_0 = \phi_s(\alpha_0)$ for some $\alpha_0$.
In view of Lemma \ref{lem:sharp} it is enough to show that the sequence $\alpha_n(\phi_0)$ obtained there converges (unless $\phi_0$ takes one of countably many values).
Suppose that it does not.
Let 
$$\underline{\alpha_0} =  \liminf_{n\to \infty} \alpha_n(\phi_0)$$ and $$\overline{\alpha_0} = \limsup_{n \to \infty} \alpha_n(\phi_0) .$$
By assumption $\underline{\alpha_0}$ and $\overline{\alpha_0}$ disagree.
Then we can choose increasing sequences $\{m_i\}_{i=1}^{\infty}$ and $\{n_i\}_{i=1}^{\infty}$ such that
$$ \lim_{i\to \infty} \alpha_{m_i}(\phi_0) = \overline{\alpha_0}$$
and
$$ \lim_{i\to \infty} \alpha_{n_i}(\phi_0) = \underline{\alpha_0}.$$

Let $\underline{\alpha_0} \leq \alpha \leq \overline{\alpha_0}$. 
Then for sufficiently large $i$ there exists $\epsilon>0$ such that
$$ Q_{m_i}(\alpha, \phi_0) \leq Q_{m_i}(\alpha_{m_i}(\phi_0)-\epsilon, \phi_0) \to 0 \textrm{ as $i\to \infty$} $$ and
$$ Q_{n_i}(\alpha, \phi_0) \geq Q_{n_i}(\alpha_{n_i}(\phi_0)+\epsilon, \phi_0) \to 1 \textrm{ as $i\to \infty$} .$$

Moreover since $\alpha\geq 0$ we have
$$ Q_{m_i}(\alpha,\phi_0) = \prob \left[Z(F(m_i, \alpha)) < 2^{m_i\phi_0}]  = \prob [\tfrac{1}{m_i}\log Z(F(m_i, \alpha)) < \phi_0 \right]$$
and so we have 
$$\lim_{i \to \infty} \mathbb E \left[ \tfrac{1}{m_i} \log Z(F(m_i, \alpha)) \right] \geq \phi_0$$
i.e.
$$ \phi_s(\alpha) \geq \phi_0, $$
since the above expectation $\psi_n(\alpha)$ converges to $\phi_s(\alpha)$.
A similar argument shows that 
$$\lim_{i \to \infty} \mathbb E \left[ \tfrac{1}{n_i} \log Z(F(n_i, \alpha)) \right] \leq \phi_0$$
and so 
$$ \phi_s(\alpha) \leq \phi_0. $$

It follows that the function $\phi_s$ is constant on $(\underline{\alpha_0} , \overline{\alpha_0})$.
Since $\phi_s$ is non-increasing on $[0,\infty)$ it follows from Froda's theorem that there are countably many values $\phi_0$ for which $\alpha_n(\phi_0)$ does not converge.
This completes the proof.
\end{proof}

We are now all set to prove our main theorem, which we restate now for convenience.

\begin{thm}
For every $k\geq 2$, there exist a countable set $\mathcal D$ and a function $\phi_s: \mathbb R_{\geq 0} \to [0,1]$ such that for every $\alpha \notin \mathcal D$ and every $\epsilon > 0$,
$$\lim_{n\to \infty} Q_n(\alpha, \phi_s(\alpha)-\epsilon) = 0 $$
$$\lim_{n\to \infty} Q_n(\alpha, \phi_s(\alpha)-\epsilon) = 0 $$
\end{thm}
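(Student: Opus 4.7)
The plan is to deduce Theorem \ref{thm:main} from Theorem \ref{thm:freeze} by inverting the roles of $\alpha$ and $\phi$. First, $\phi_s$ is non-increasing on $[0,\alpha^*]$: the coupling $F(n,\alpha)\subseteq F(n,\alpha')$ for $\alpha\leq \alpha'$ forces $Z(F(n,\alpha))\geq Z(F(n,\alpha'))$ pointwise, so $\psi_n$ is non-increasing and its almost sure limit $\phi_s$ is too. By Froda's theorem the set of discontinuities of $\phi_s$ is countable, and I take $\mathcal D$ to be exactly this set.

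Fix $\alpha\notin \mathcal D$ and $\epsilon>0$, and set $\phi_0:=\phi_s(\alpha)$. The argument hinges on the double monotonicity of $Q_n(\alpha,\phi)$: non-decreasing in $\phi$ trivially, and non-decreasing in $\alpha$ by the same coupling. Since $\mathcal C$ from Theorem \ref{thm:freeze} is countable, I choose
\[
\phi^- \in (\phi_0-\epsilon, \phi_0)\setminus \mathcal C, \qquad \phi^+ \in (\phi_0, \phi_0+\epsilon)\setminus \mathcal C.
\]
A minor extension of Theorem \ref{thm:freeze} is needed here: the conclusion $\alpha_n(\phi)\to \alpha_s(\phi)$ in fact holds for every $\phi\in[0,1)\setminus \mathcal C$, not only $\phi$ in the range of $\phi_s$. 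This is visible from the proof in Appendix Section \ref{sec:freeze}: non-convergence of $\alpha_n(\phi)$ was shown to force $\phi_s$ to equal $\phi$ on a nondegenerate interval, which automatically places $\phi$ in both $\mathcal C$ and the range of $\phi_s$. Thus $\alpha_s(\phi^\pm)$ are well-defined.

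The heart of the proof is to establish the strict inequalities
\[
\alpha_s(\phi^+) \;<\; \alpha \;<\; \alpha_s(\phi^-).
\]
For the right one, any $\alpha''>\alpha_s(\phi^-)$ satisfies $Q_n(\alpha'',\phi^-)\to 1$ by Theorem \ref{thm:freeze}, which combined with $\tfrac{1}{n}\log Z\in[0,1]$ yields $\psi_n(\alpha'')\leq \phi^-+o(1)$ and hence $\phi_s(\alpha'')\leq \phi^-<\phi_0=\phi_s(\alpha)$; monotonicity of $\phi_s$ then forces $\alpha<\alpha''$, and letting $\alpha''\downarrow \alpha_s(\phi^-)$ gives $\alpha\leq \alpha_s(\phi^-)$. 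Strictness is where the hypothesis $\alpha\notin\mathcal D$ is used: if equality held then every $\alpha''>\alpha$ would satisfy $\phi_s(\alpha'')\leq \phi^-<\phi_0$, producing a right-jump of $\phi_s$ at $\alpha$ of size at least $\phi_0-\phi^->0$, contradicting continuity at $\alpha$. The inequality $\alpha_s(\phi^+)<\alpha$ is symmetric via a left-jump.

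Given the strict inequalities, I pick $\epsilon''>0$ small enough that $\alpha_s(\phi^+)+\epsilon''\leq \alpha\leq \alpha_s(\phi^-)-\epsilon''$, and conclude by double monotonicity of $Q_n$ together with Theorem \ref{thm:freeze}:
\[
Q_n(\alpha,\phi_0-\epsilon) \leq Q_n(\alpha,\phi^-) \leq Q_n(\alpha_s(\phi^-)-\epsilon'',\phi^-) \longrightarrow 0,
\]
\[
Q_n(\alpha,\phi_0+\epsilon) \geq Q_n(\alpha,\phi^+) \geq Q_n(\alpha_s(\phi^+)+\epsilon'',\phi^+) \longrightarrow 1,
\]
completing the argument. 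I expect the main obstacle to be the careful bookkeeping around the strict inequalities combined with the extension of Theorem \ref{thm:freeze} outside the range of $\phi_s$: the latter is needed because $\phi^\pm$ can legitimately lie in gaps of the range whenever $\phi_s$ has jumps, and the former is precisely what makes the countable set $\mathcal D$ of discontinuities of $\phi_s$ suffice as the exceptional set.
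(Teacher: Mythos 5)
Your proof is correct and follows essentially the same strategy as the paper's: take $\mathcal D$ to be the discontinuity set of the non-increasing function $\phi_s$ (countable by Froda), pick auxiliary values $\phi^\pm$ near $\phi_s(\alpha)$ avoiding the countable exceptional set $\mathcal C$ of Theorem~\ref{thm:freeze}, and close with the double monotonicity of $Q_n$. You do, however, correctly identify and patch a gap that the paper glosses over: Theorem~\ref{thm:freeze} as stated only yields convergence of $\alpha_n(\phi)$ for $\phi$ in the range of $\phi_s$, and continuity of $\phi_s$ at $\alpha$ does not by itself guarantee that $\phi^\pm$ (the paper's $\phi^*$) lie in that range, since $\phi_s$ may jump at other densities and skip over $\phi^\pm$. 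Your observation that the proof of Theorem~\ref{thm:freeze} in fact gives convergence for every $\phi\in[0,1)\setminus\mathcal C$ --- because non-convergence forces $\phi_s\equiv\phi$ on a nondegenerate interval, automatically placing $\phi$ in the range and, by Froda, restricting such $\phi$ to a countable set --- is exactly the needed extension and is implicit but unstated in the paper; likewise your derivation of the strict inequalities $\alpha_s(\phi^+)<\alpha<\alpha_s(\phi^-)$ from $\alpha\notin\mathcal D$ makes precise the step the paper compresses into ``there exists $\delta>0$.''
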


\begin{proof}[Proof of Theorem \ref{thm:main}]
Let $\phi_s$ be the function obtained in Lemma \ref{lem:sharp}, and let $\mathcal D$ denote the (countable) set of its discontinuities.
Assume $\alpha \in [0,\alpha^*] \setminus \mathcal D$, and let $\phi_s(\alpha) = \lim_{n \to \infty} \frac 1n \mathbb E [\log Z(F(n,\alpha))]$ as in Theorem \ref{thm:conv}.

Lemma \ref{lem:sharp} implies that for some countable $\mathcal C$, the limit $A(\phi) = \lim_{n\to \infty} \alpha_n(\phi)$ exists for each $\phi \in \phi_s([0,\alpha^*])\setminus \mathcal C$.
Thus, there exists some $\epsilon' < \epsilon$ such that for $\phi^* := \phi_s(\alpha) - \epsilon'$ we have $\alpha_n(\phi^*)$ converges to a limit $A(\phi^*) > \alpha$.
Therefore, there exists $\delta>0$ such that
$$Q_n(\alpha, \phi_s(\alpha)-\epsilon) \leq Q_n(\alpha, \phi^*) \leq Q_n(\alpha_n(\phi^*) - \delta, \phi^*) \to 0.$$

It follows that $Q_n(\alpha, \phi_s(\alpha)-\epsilon) \to 0$ as $n\to \infty$.
A symmetric argument shows that $Q_n(\alpha, \phi_s(\alpha)+\epsilon) \to 1$ as $n\to \infty$.
This completes the proof.
\end{proof}

\section{Proof of Theorem \ref{thm:goldreichcurve}} \label{sec:goldreichcurve}
Finally we now give the proof of Theorem \ref{thm:goldreichcurve}.

\begin{proof}[Proof of Theorem \ref{thm:goldreichcurve}]
We consider the model of \eqref{proc:gold} for the Goldreich one-way function candidate. Let us denote by $X$ a uniformly drawn input in $\{0,1\}^n$ and by $G$ a random hypergraph of fixed density. The output of the Goldreich one-way function candidate is the vector $Y(X,G)=\{\chi(X[e])\}_{e \in E(G)}$. We denote the number of pre-images of this output by $Z(X,G)$. In what follows, we show that the random variable $L(G):=\E_X \log Z(X,G)$ concentrates around its expectation $\E_{G} L(G)$, which depends on $n$. 

For that purpose, we show that for any that $\eps>0$,
\begin{align}
\pp_G \{ | L(G)-\E_{G} L(G) |  \geq n \eps  \} \leq 2 e^{- n \eps^2/2},
\end{align}
which implies that $|L(G)/n-\E_{G} L(G)/n|$ converges almost surely to $0$ from the Borel-Cantelli Lemma.
The above inequality results from a standard application of the Azuma-Hoeffding inequality \cite{azuma}, as used in \cite{abbetoc} for more general models. Since the graph is Erdos-Renyi, we consider equivalently the edges to be drawn uniformly at random (conditioning on the number of edges in the graph). We need to show that, if $e$ is an edge picked uniformly at random and $G \cup e$ is the augmented graph, the increment $L(G)- L(G \cup e)$ is bounded. In fact, 
\begin{align}
L(G)- L(G \cup e) &= \E_X \log \frac{Z(X,G)}{Z(X,G \cup e)} \\
& \leq  \log \E_X \frac{Z(X,G)}{Z(X,G \cup e)} \\
&= - \log \E_X  \E_{U|X} \mathds{1}(\chi(X[e]) =\chi(U[e])) \label{collision}
\end{align}
where $U$ is a random vector uniformly drawn among all vectors $u \in \{0,1\}^n$ such that the output of $u$ is the same as the output of $X$ on the one-way function defined by $G$ and $\chi$. Note that $X$ and $U$ are not independent but exchangeable, i.e., they are independent conditionally on their common output $Y$. Therefore 
\begin{align}
\E_X  \E_{U|X} \mathds{1}(\chi(X[e]) =\chi(U[e])) &= \E_{X,U} \mathds{1}(\chi(X[e]) =\chi(U[e])) \\
&= \E_{X[e],U[e]} \mathds{1}(\chi(X[e]) =\chi(U[e])) \\
&= \sum_{y} \E_{X[e],U[e]|Y=y} \mathds{1}(\chi(X[e]) =\chi(U[e])) \pp\{Y=y \} \\
&= \sum_{y}  (\pp \{S_0 | Y=y \}^2 +  \pp \{ S_1 | Y=y \}^2 ) \pp\{Y=y \}
\end{align}
where $\pp \{ S_i | Y=y \}$ is the probability that $X[e]$ belongs to $\chi^{-1}(i)$ given that $Y=y$, for $i=0,1$.
Since $\pp \{S_0 | Y=y \} +  \pp \{ S_1 | Y=y \} =1$, we have $\pp \{S_0 | Y=y \}^2 +  \pp \{ S_1 | Y=y \}^2 \geq 1/2$, hence  
\begin{align}
\sum_{y}  (\pp \{S_0 | Y=y \}^2 +  \pp \{ S_1 | Y=y \}^2 ) \pp\{Y=y \} \geq 1/2,
\end{align}
and \eqref{collision} is upper-bounded by $\log(2)=1$. 
\end{proof}

\section{A sharp $n$ and $\chi$-dependent threshold for CSPs from Goldreich's functions}\label{sec:sharpgold}

To prove Lemma \ref{lem:sharpproperty} for a Goldreich random CSP as in (\ref{proc:gold}) we need only make a slight modification to the proof in Section \ref{sec:planted}.
First, to put ourselves in the setting where we have a product measure we fix the predicate $\chi$ (we do not need to fix the planted solution as we did above). 
This implies that the threshold we obtain may depend on both $n$ and $\chi$.
As before, let $\mathcal A_{\phi} = \{F \in \{0,1\}^N; Z(F)< 2^{\phi n}\}$,
and now let $F=F(n,p)$ denote a CSP obtained as in (\ref{proc:gold}), with $\chi$ fixed and denote by $v^0$ the planted solution.
The space we are working in is $\{0,1\}^{N}$, where $N =2 \binom nk$, and indeed $\mu_p(F) = p^{|F|}(1-p)^{N-|F|}$.

Lemma \ref{lem:sharp} can be restated as follows.
\begin{lemma}\label{lem:sharppropertygold}
For a fixed $k$ and $\phi>0$, the property $\mathcal A_{\phi}$ has a sharp threshold.
\end{lemma}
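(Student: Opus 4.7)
Proof plan. The argument parallels the proof of Lemma \ref{lem:sharpproperty}, so I describe only the adaptations required by the Goldreich setup. The first step is to pass to a product measure: fix $\chi$ and observe that conditional on $v^0 = v$, each of the $\binom nk$ clauses of the form $(e, \chi(v[e]))$ is included independently with probability $p$, giving a product measure on the valid coordinates. This replaces the reduction to $v^0 = 0^n$ via negation-symmetry used in Lemma \ref{lem:sharpproperty}, which is unavailable once $\chi$ is held fixed. We retain permutation symmetry: $\mathcal A_\phi$ is invariant under relabelings of the variable set, so the conditional probability $\prob[F \in \mathcal A_\phi \mid v^0 = v]$ depends only on the orbit of $v$ under $S_n$.

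Assuming for contradiction that the unconditional probability has a coarse threshold, one can find a $v$ for which the corresponding conditional probability also has a coarse threshold at some $p_\gamma = o(1)$, and apply Theorem \ref{thm:bourgain} in this conditional setting. Case 1 (a small $x' \in \mathcal A_\phi$ appears with positive probability) is ruled out exactly as before: any $x'$ with $|x'| \leq 10C$ touches at most $10Ck$ variables, so $Z(x') \geq 2^{n-10Ck} > 2^{\phi n}$ for $n$ large. Case 2 yields a bounded-size $x' \notin \mathcal A_\phi$ whose presence boosts the conditional probability of $\mathcal A_\phi$ by at least $\delta$; permutation symmetry turns $x'$ into a uniformly relabeled copy $x^*$, the two-round exposure argument produces a fixed $H_0 \notin \mathcal A_\phi$ satisfying the analog of (\ref{eq:H}), and the set $T$ of ``bad'' $t$-tuples (those $w$ with $Z(H_0 \cup x'(w)) < 2^{\phi n}$) has density at least $\delta/2$. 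Erd\H os--Simonovits then supplies, with positive probability, a $T$-complete collection of $t$ disjoint $k$-tuples.

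The principal obstacle is the forcing step, and this is where the antisymmetric hypothesis enters. In Lemma \ref{lem:sharpproperty} each edge admitted $2^k - 1$ distinct satisfying clauses, and a uniformly random clause hit a chosen forcing pattern with probability $2^{-k}$. In the Goldreich model, once $v^0$ is fixed each edge admits exactly one clause, $\chi(y[e]) = \chi(v[e])$, so we cannot choose an alternative pattern. I would arrange forcing via antisymmetry: if $v[e] = x_0$ with $\chi(x_0) \neq \chi(\overline{x_0})$, the clause on $e$ excludes $y[e] = \overline{x_0}$, hence forces at least one bit of $y[e]$ to agree with $v[e]$. Choosing $v$ so that its restriction to the edges of the $T$-complete collection realizes the antisymmetric pattern $x_0$ makes every random clause on those edges automatically forcing --- a deterministic analog of the $2^{-k}$ event in Lemma \ref{lem:sharpproperty}.

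With the forcing in hand, the remainder copies Lemma \ref{lem:sharpproperty}: with high probability $F^0(n, \varepsilon' p_\gamma)$ contains $t$ forcing clauses on a $T$-complete family, so any satisfying assignment of $H_0 \cup F^0(n, \varepsilon' p_\gamma)$ lies over some bad tuple (at most $k^t 2^{\phi n}$ candidates); each remaining random clause is violated by a given candidate with probability $1/2$ when $\chi$ is balanced; Markov's inequality then forces $H_0 \cup F^0(n, \varepsilon' p_\gamma) \in \mathcal A_\phi$ with probability exceeding $1 - \delta/2$, contradicting the analog of (\ref{eq:H}). The hardest step I expect is justifying the ``strategic'' choice of $v$ used in the forcing step and reconciling it with the permutation-symmetric averaging needed to transfer the conditional sharp threshold to the unconditional one; this is precisely where the antisymmetric hypothesis absorbs the technical content beyond Lemma \ref{lem:sharpproperty}.
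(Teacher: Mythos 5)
Your overall strategy matches the paper's: condition on a fixed predicate $\chi$ and a fixed planted solution $v^0$ to obtain a genuine product measure, run Bourgain's theorem in that conditional space, discharge Case~1 exactly as in Lemma~\ref{lem:sharpproperty}, and adapt the forcing step in Case~2 to the Goldreich setting where each edge carries a single predetermined clause $\chi(y[e])=\chi(v^0[e])$ rather than $2^k-1$ admissible negation patterns. Your identification of antisymmetry as the mechanism that recovers forcing --- a clause excludes the complemented pattern $\overline{v^0[e]}$ whenever $\chi(v^0[e])\neq\chi(\overline{v^0[e]})$, hence pins at least one bit to the planted value --- is precisely what the paper uses. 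Your observation that the per-clause violation probability for a non-solution is $1/2$ when $\chi$ is balanced is in fact more careful than the paper's text, which carries over the $(1-2^{-k})$ factor from the SAT case without adaptation.

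The genuine divergence, and where your proposal has a gap, is in how the forcing event is realized. You propose ``choosing $v$ so that its restriction to the edges of the $T$-complete collection realizes the antisymmetric pattern $x_0$,'' making the forcing deterministic. This is circular: the bad set $T$ is defined through $H_0$, which is produced by the two-round exposure argument inside the conditional measure determined by $v$; and the $T$-complete family of disjoint $k$-tuples is drawn only after $T$ is known. So one cannot select $v$ to suit a collection that has not yet been determined. The paper's order of quantifiers avoids this: $v^0$ is fixed first (and generic --- near-balanced Hamming weight suffices so that antisymmetric-witness patterns occur with density $\Theta(2^{-k})$ among edges), and then the forcing event is a \emph{probabilistic} one over the additional random clauses of $F^0(n,\varepsilon' p_\gamma)$: for each of the $t$ drawn $k$-tuples, the probability that $v^0$ restricted to it is an antisymmetric witness is $\approx 2^{-k}$, giving a total $\approx \gamma' 2^{-kt}$ probability that a random choice of $t$ disjoint $k$-tuples is simultaneously $T$-complete and all-forcing, which is still constant and suffices for the Markov argument. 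Your proposal should be rewritten to keep $v^0$ fixed and pay this $2^{-kt}$ probability factor, rather than trying to tailor $v^0$.

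One further point you are right to flag but should not treat as resolved: since the conditional sharp threshold depends a~priori on $v^0$ (through its Hamming weight orbit), transferring to the unconditional statement requires either uniformity of the conditional threshold over typical $v^0$'s or an averaging argument. The paper does not address this explicitly either (it simply notes the resulting threshold ``may depend on $n$ and $\chi$''), so your candor here identifies a real soft spot shared with the source.
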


The only place in which the proof of Lemma \ref{lem:sharppropertygold} differs from the proof of Lemma \ref{lem:sharpproperty} is in the application of Theorem \ref{thm:es}, but we give the details for completeness.

\begin{proof}[Proof of Theorem \ref{lem:sharppropertygold}]
Suppose for a contradiction that $\mathcal A_{\phi}$ has a coarse threshold.
Then there exist $\gamma$, $p_{\gamma} = o(1)$ and $C$ as in Theorem \ref{thm:bourgain}, and so one of the two cases in its conclusion must hold.

\noindent {\bf Case 1:} $\mu_p(x\in \{0,1\}^n : x \textrm{ contains } x'\in \mathcal A_{\phi} \textrm{ of size } |x'| \leq 10C) > \delta$.\\
If the size of a formula $x'$ is $\leq 10C$, then its clauses involve at most $10Ck$ variables.
Since $x'\in \mathcal A_{\phi}$, and it is satisfied by $v^0$, assigning the planted value to the variables appearing in $x'$ and arbitrary values to the other variables yields a satisfying assignment.
It follows that $Z(x') \geq 2^{n-10Ck} > 2^{\phi n}$ for large enough $n$, so $x' \notin \mathcal{A}_{\phi}$.
This proves that Case 1 cannot occur.
\\

\noindent {\bf Case 2:} there exists $x'\notin \mathcal{A_{\phi}}$ of size $|x'|\leq 10C$ such that the conditional probability satisfies $\mu_{p_{\gamma}}(x\in \mathcal A_{\phi} | x'\subset x) > \gamma+\delta$.\\

Clearly $x'$ is satisfied by $v^0$.
Denote by $t\leq 10Ck$ the number of variables appearing in $x'$.
Without loss of generality, assume these variables are $v_1,\dots,v_t$.
For a $t$-tuple $v = (v_{i_1},\dots, v_{i_t})$ of distinct variables, we write $x'(v)$ to denote the result of relabeling each variable $v_j$ in $x'$ to $v_{i_j}$.
Since $\mathcal A_{\phi}$ has permutation symmetry, it follows that for any $t$-tuple $v$, the conditional probability satisfies $\mu_p(x\in \mathcal A | x(v)\subset x) > \gamma+\delta $.
We write $x^*$ to mean the result of taking $x(v)$ after drawing a uniformly random $t$-tuple $v$.
In other words, if a random formula $F^0(n,p_{\gamma})$ is drawn, the union $F^0(n,p_{\gamma}) \cup x^*$ belongs to $\mathcal{A}_{\phi}$ with probability at least $\gamma + \delta$.

Now, since $p_{\gamma}\frac{d\mu_p(\mathcal A_{\phi})}{dp}|_{p=p_{\gamma}} < C$ it follows that $\lim_{\ve \rightarrow \infty} \frac{\mu_{p_{\gamma}+\ve p_{\gamma}}(\mathcal A_{\phi}) - \mu_{p_{\gamma}}(\mathcal A_{\phi})}{\ve p_{\gamma}} < C$. 
Thus, for some $\ve$ we have $\mu_{p_{\gamma}+\ve p_{\gamma}}(\mathcal A_{\phi}) < \gamma + \tfrac{\delta}{2}$.
Further, (by a standard two-round exposure argument) choosing a formula $F^0(n,p_{\gamma} + \ve p_\gamma)$ is equivalent to choosing formulae $F^0(n,p_{\gamma})$ and $F^0(n,\ve'p_{\gamma})$ for some $\ve'$ and taking their union. 
Note that $\ve,\ve'$ don't depend on $n$, since $C$ does not.

Denote by $x^*$ a random copy of $x'$ drawn as above. Then the above tells us that
$$ \prob [ F^0(n,p_{\gamma}) \cup x^* \in \mathcal A_{\phi} ] > \gamma + \delta $$ while
$$ \prob [ F^0(n,p_{\gamma})  \cup F^0(n,\ve'p_{\gamma}) \in \mathcal A_{\phi} ] < \gamma + \tfrac{\delta}{2}. $$
It follows that for some formula $H_0 \in \{0,1\}^N$ we have 
\begin{equation}\label{eq:H2}
\prob [ H_0 \cup x^* \in \mathcal A_{\phi} ] - \prob [ H_0 \cup F^0(n,\ve'p_{\gamma}) \in \mathcal A_{\phi} ] > \tfrac{\delta}{2}
\end{equation}

Clearly, $H_0 \notin \mathcal A_{\phi}$.
Let's say that a $t$-tuple of distinct variables $v = (v_{i_1},\dots,v_{i_t}) \in \{v_1,\dots, v_n\}^t$ is \emph{bad} if $Z(H_0 \cup x(v)) < 2^{\phi n}$.
It follows that at least a $\tfrac{\delta}{2}$ fraction of all $\binom nt t!$ $t$-tuples are bad. Let $T$ be the set of bad tuples.
We need Erd\H os and Simonovits' Theorem \ref{thm:es} again.

% \begin{theorem}[Erd\H os and Simonovits]\label{thm:es}
% Let $k,t$ be positive integers and $0\leq \gamma \leq 1$. 
% There exists $\gamma' > 0$ such that for sufficiently large $n$, if $T\subset [n]^t$ is such that $|T|>\gamma n^t$ then with probability at least $\gamma'$ a random choice of $t$ disjoint $k$-tuples $X_1,\dots X_t$ from $[n]$ satisfies that every $t$-tuple $(x_1,\dots,x_t)$ with $x_i\in X_i$ is bad. 
% We say that $X_1,\dots,X_t$ is \emph{$T$-complete}.
% \end{theorem}

% We will obtain a contradiction from Theorem \ref{thm:es}.
We will ensure that with high probability, adding $F^0(n,\ve'p)$ to $H_0$ implies adding clauses $C_1,\dots, C_t$, where each clause $C_i$ forces some variable to be set to its planted value, and the set of $k$-tuples of variables in the clauses is $T$-complete.

Consider drawing $t$ random clauses.
Applying Theorem \ref{thm:es} with $\gamma = \tfrac{\delta}{2}$ we find some $\gamma'$ for which the $t$ $k$-clauses are $T$-complete with probability at least $\gamma'$.
Given that they are $T$-complete, the probability that they are each of the form 
$\chi(v_{i_1}\dots v_{i_k}) = \chi(v_{i_1}^0\dots v_{i_k}^0)$ 
By the antisymmetry of $chi$, each such clause forces some variable to take the planted value.

We claim that adding $t$ such clauses to $H_0$ yields a formula with $<k^t 2^{\phi n}$ satisfying assignments. 
Indeed, suppose we have a satisfying assignment. 
Then at least one variable, say $c_i$, from each of the $C_i$ must be set to the planted value $c_i^0$.
But $(c_1,\dots,c_t)$ is a bad tuple so there are fewer than $2^{\phi n}$ ways to extend these to the remaining variables to get a satisfying assignment for $H_0$.

With high probability, $F(\ve' p_{\gamma})$ has $\Theta(\ve' p_{\gamma} \binom nk (2^k-1)) \rightarrow \infty$ clauses.
So if we draw $F^0(n,\ve' p_{\gamma})$ the probability that the clauses added don't include $t$ clauses which force a $0$ variable as above is at most about ${(1-\gamma'2^{-kt})^{ \ve' p_{\gamma} \binom nk (2^k-1) /t}}$, which we can make as small as we like as $n\rightarrow \infty$. In particular, we can assume it is smaller than $\tfrac {\delta}{2}$.
In the event that $F^0(n,\ve' p_{\gamma})$ does include these $t$ clauses $C_1,\dots,C_t$, consider a satisfying assignment of $H_0 \cup C_1 \dots C_t$.
The probability that it satisfies a randomly chosen $k$-clause is $(1-2^{-k})$.
Therefore, in this case the expected value of $Z(H_0 \cup F^0(n,\ve' p_{\gamma}))$ is at most ${k^t 2^{\phi n} (1-2^{-k})^{|F^0(n,\ve' p_{\gamma})|-t} < 2^{\phi n}}$ with high probability. Applying Markov's inequality, we can ensure that with probability greater than $1-\tfrac{\delta}{2}$, the formula $H_0 \cup F^0(n,\ve' p_{\gamma})\in \mathcal A_{\phi}$, contradicting (\ref{eq:H2}).
This proves Case 2 cannot occur and completes the proof of the lemma.

\end{proof}

\end{document}